\newenvironment{keywords}{
\list{}{\advance\topsep by0.35cm\relax\small
\leftmargin=1cm
\itemindent\listparindent
\rightmargin\leftmargin}\item[\hskip\labelsep
\bfseries Keywords:]}
{\endlist}
\newtheorem{theorem}{Theorem}[section]
\newtheorem{lemma}[theorem]{Lemma}
\newtheorem{remark}[theorem]{Remark}
\begin{document}

\title{Global existence for a strongly coupled reaction diffusion system}

\author{ \textsc{\ Said Kouachi,\thanks{%
E-mail: kouachi@hotmail.com 
Department of Mathematics, College of Science, 
Qassim University, P.O.Box 6644, Al-Gassim, Buraydah 51452, Kingdom of Saudi Arabia.
}
\hspace{0.1cm} Kamuela E. Yong,\thanks{%
E-mail: kamuela.yong@asu.edu
Simon A. Levin Mathematical, Computational and Modeling Sciences Center $\&$ School of Mathematical and Statistical Sciences, Arizona State University, Tempe, AZ, 85287-1904,USA.} \hspace{0.1cm} 
\& Rana D. Parshad\thanks{
E-mail: rparshad@clarkson.edu
Department of Mathematics, Clarkson University, Potsdam, NY 13699, USA.
}\vspace{0.3cm}} \\
}

\date{}
\maketitle

\begin{abstract}
In this work we use functional methods to prove the boundedness and global existence of solutions for a class of strongly coupled parabolic systems. We apply the results to deduce the global existence of solutions for a classic Shigesada-Kawasaki-Teramoto (SKT) type model for an extended range of the self-diffusion and cross-diffusion coefficients than those available in the current literature. We perform numerical simulations in 2D, via a spectral Galerkin method to verify our global existence results, as well as to visualize the dynamics of the system.
\end{abstract}

\begin{keywords}
strongly coupled parabolic system, SKT model, spectral Galerkin method.
\end{keywords}

\section{Introduction}

Modeling species interactions occupies a central theme in spatial ecology and mathematical biology. Many of these models take the form of reaction diffusion equations, where the reaction terms describe both inter-species and intra-species dynamics, including factors such as birth and death. The diffusion represents random spatial movement of the species. Recently there has been a lot of activity concerning models where the diffusion of one species may be influenced by another species. These models come under the class of strongly coupled parabolic systems \cite{J06,L96} and are also known in the literature as cross-diffusion systems. Although these models may be more realistic from a biological or modeling point of view (as they naturally incorporate inter-species and intra-species competition), they are more difficult to handle mathematically than their constant coefficient, pure diffusion counterparts. Their classical applications are immense and there are new potential applications ranging from bee pollination \cite{Yong} to pedestrian flow \cite{Berres}. Among the first efforts to model cross-diffusion was the model system proposed in \cite{Shigesada} by Shigesada, Kawasaki and Teramoto. This model and its variants are now known as SKT models. For a detailed steady state analysis of this model one can see \cite{L96}. In the current work, we consider the following cross-diffusion system which is a generalized form of the SKT model, 
\begin{equation}
\label{eq:1.1}
\frac{\partial u}{\partial t}-\nabla \left[ P^{u}\nabla u+P^{v}\nabla v%
\right] =f(u,v)\;=u\left( a_{1}-b_{1}u+c_{1}v\right) \;\;\;\text{in }\mathbb{%
R}^{+}\times \Omega ,  
\end{equation}
\begin{equation}
\label{eq:1.2}
\frac{\partial v}{\partial t}-\nabla \left[ Q^{u}\nabla u+Q^{v}\nabla v%
\right] =g(u,v)=v\left( a_{2}+b_{2}u-c_{2}v\right) \;\;\;\;\text{in }\mathbb{%
R}^{+}\times \Omega ,  
\end{equation}
with the boundary conditions
\begin{equation}
\label{eq:1.3}
\frac{\partial u}{\partial \eta }=\frac{\partial v}{\partial \eta }=0\;\;\;\;%
\text{on }\mathbb{R}^{+}\times \partial \Omega ,  
\end{equation}
and the initial data
\begin{equation}
\label{eq:1.4}
u(0,x)=u_{0}(x),\qquad v(0,x)=v_{0}(x)\;\;\;\;\text{in}\;\Omega , 
\end{equation}
where
\begin{equation}
\label{eq:1.5}
\left\{ 
\begin{array}{c}
P^{u}=d_{1}+\alpha _{11}u+\alpha _{12}v,\ \ \ P^{v}=b_{11}u, \\ 
Q^{u}=b_{22}v,\ \ \ Q^{v}=d_{2}+\alpha _{21}u+\alpha _{22}v.%
\end{array}%
\right.   
\end{equation}
Also, to simplify the notations, we let
\begin{equation*}
P=P^{u}\nabla u+P^{v}\nabla v,\ \ \ \ Q=Q^{u}\nabla u+Q^{v}\nabla v.
\end{equation*}%
The domain $\Omega $ is an open bounded domain of class $\mathbb{C}^{1}$ in $%
\mathbb{R}^{N}$, with boundary $\partial \Omega $ and $\dfrac{\partial }{%
\partial \eta }$ denotes the outward normal derivative on $\partial \Omega $%
. The components $u$ and $v$ are the solution to system \eqref{eq:1.1}-\eqref{eq:1.2}, are
nonnegative functions, and represent the population densities of the prey
and predator species, respectively. $d_{1}$ and $d_{2}$ are positive
constants representing the random diffusion rates of the two species
respectively. The initial data are functions in $W^{1,p}\left( \Omega
\right) \ \left( p>N\right) $ and assumed to be nonnegative which imply,
via the maximum principle \cite{Smoller}, the positivity of
the solution on its interval of existence.
The constants $a_{i}, b_{i}, c_{i} \left( i=1,2\right)$  are any real numbers with $b_{1},c_{2}>0$, where $b_{2}$ and $c_{1}$
 are either nonpostive or both positive, and sufficiently small. Also we may have $c_1 b_2 < 0$, with one positive and one negative, with the positive one sufficiently small in \eqref{eq:3.38}, \eqref{eq:3.381}. The constants $\alpha
_{ij},  b_{ii} \left( i,j=1,2\right) $ are nonnegative constants: $\alpha
_{11}$ and $\alpha _{22}$ are the self-diffusion rates, $\alpha _{12}$ and $%
\alpha _{21}$ are the cross-diffusion rates and $b_{11}, b_{22}$ are the
gradient cross-diffusions. When $\alpha_{12}=b_{11}$, $\alpha_{21}=b_{22}$ and $b_{1} < 0$, $c_{2}< 0$ then \eqref{eq:1.1}-\eqref{eq:1.2} is in fact the SKT model \cite{Shigesada}.

However when the reaction diffusion systems include cross-diffusion as in
 \eqref{eq:1.1}-\eqref{eq:1.2}, there are not many results: When $d_{1}>$ $d_{2},\ \alpha
_{12}=\alpha _{22}=0,\ a_{1}=b_{1}=c_{2}=0,$ an analogous system describing
epidemics, M. Kirane and S. Kouachi established in a series of papers 
\cite{Kirane-Kouachi1,Kirane-Kouachi2,Kirane-Kouachi3} studying global
existence and asymptotics. Shim \cite{Shim1} proved the existence of global
solutions to   \eqref{eq:1.1}-\eqref{eq:1.4} in the space dimension $N=1$, when $d_{1}=d_{2}$
and $\alpha _{11}=\alpha _{22}=0$ or when $0<\alpha _{21}<8\alpha _{11}$ and 
$0<\alpha _{12}<8\alpha _{22}.$ Also in the space dimension $N=1,$ the
same author established global existence when $\alpha _{21}=0.$
Recently,  Xu in \cite{Sheng} showed global existence when $\alpha
_{12}=0$ in two cases: when $\alpha _{11}=0$ or when $\alpha _{11}>0$ and $
N<10.$ The condition $\alpha _{12}=0$ implies by simple application of the maximum principle, the uniform boudedness of $
u(t,.)$ on $[0,T_{\max }[\times \Omega $, where $T_{\max }$ denotes the
eventual blowing-up time in $\mathbb{L}^{\infty }(\Omega ).$
There is a large literature on this class of models, and the interested reader is referred to \cite{Kuto, Loum, Lou98, Matano, Mimura, Pao, Ryu1, Ryu2}. The global existence of weak solutions under natural conditions is settled \cite{CJ04, GJ03}. Recently a number of sophisticated tools that include the use of Compatano and Morrey spaces, as well as Entropy functional techniques have been devised and applied to these problems. For details the reader is referred to \cite{J06,Le-Ngu,LN6}. Also, there have been recent efforts to prove global boundedness of weak solutions to cross diffusion systems, using an entropy functional approach \cite{JZ14}.
On the more applied side, Yong et al.~\cite{Yong} considered a diffusion system, modeling the interactions between honey bees and solitary bees using a form of the SKT model. They derive a number of interesting conclusions from their work by restricting the self-diffusion and cross-diffusion coefficients and compare their simulation results to field data \cite{Yong}

We focus on \cite{D. Le}, where the author considered \eqref{eq:1.1}-\eqref{eq:1.4}, however he imposed the restriction that the reaction terms should be negative if either $u$ or $v
$ is sufficiently large.  This is not necessary in our case, since the second
reaction term can be large for $u\gg v$. In \cite{D. Le} the author proves global existence when $b_{2}
$ and $c_{1}$ are non-positive, under the following conditions
\begin{equation}
\label{eq:1.6}
\left. 
\begin{array}{l}
(i)\ \alpha _{11}\alpha _{22}+\alpha _{12}\alpha _{21}-b_{11}b_{22}\geq 0,
\\ 
(ii)\ \alpha _{22}-\alpha _{12}>b_{11}, \\ 
(iii)\ \alpha _{11}-\alpha _{21}>b_{22}.%
\end{array}%
\right.  
\end{equation}
When $\alpha _{ij}=0$ ($i,j=1,2$), and $b_1$ and $c_2$ are of opposite signs, the system is the well-known Lotka-Volterra
prey-predator model. In this case and for more general reactions and
boundary conditions many results have been obtained. For more details see \cite{Alikakos, Hollis-Martin-Pierre, Masuda, Morgan}.

Our primary contributions in this paper are the following:
\\
\vspace{1mm}
\\
1) We prove boundedness of solutions to \eqref{eq:1.1}-\eqref{eq:1.4}
via Theorem \ref{thm:t21}, when $\alpha _{11}>\alpha _{21}\ $and $\alpha _{22}>\alpha _{12}$ under the
following condition
\begin{equation}
\label{eq:1.7}
\left( \alpha _{11}-\alpha _{21}\right) \left( \alpha _{22}-\alpha
_{12}\right) >b_{11}\text{ }b_{22},  
\end{equation}%
It is clear that our Condition \eqref{eq:1.7} is weaker than \eqref{eq:1.6}. We next use the everywhere regularity result of bounded solutions
to  \eqref{eq:1.1}-\eqref{eq:1.4}, via Theorem \ref{thm:t11} from \cite{D. Le} to: 
\\
\vspace{1mm}
\\
2) Deduce global existence of solutions, without restriction on the
space dimension, to \eqref{eq:1.1}-\eqref{eq:1.4}, via our new Condition \eqref{eq:2.1}. This is more general than \cite{D. Le}. This is accomplished  via Theorem \ref{thm:t22}.
\\
\vspace{1mm}
\\
We then perform numerical simulations under the following strategy: 
\\
\vspace{1mm}
\\
3) We choose parameters that satisfy Condition \eqref{eq:2.1}, whilst violating \eqref{eq:1.6}. With this choice of parameters we simulate \eqref{eq:1.1}-\eqref{eq:1.4} for a host of different initial conditions. Here we demonstarte that there are a host of initial conditions, for which we have globally existing solutions, for which the results of \cite{D. Le} are not applicable, but our result Theorem \ref{thm:t22} is indeed applicable.
\vspace{1mm}

For the benefit of the reader we recap the H\"{o}lder continuity result via Theorem \ref{thm:t11}, from \cite{D. Le}

\begin{theorem}[\cite{D. Le}]
\label{thm:t11}
Assume that $d_{i},\ \alpha _{ij},\ b_{ii}>0,\ i,\ j=1,\ 2$ and%
\begin{equation}
\label{eq:1.8}
\alpha _{11}\alpha _{22}+\alpha _{12}\alpha _{21}-b_{11}b_{22}\geq 0. 
\end{equation}%
Let $V_{1}=\left( \alpha _{11}-\alpha _{21}-b_{22}\right) $ and $%
V_{2}=\left( \alpha _{22}-\alpha _{12}-b_{11}\right) $. We assume that%
\begin{equation}
\label{eq:1.9}
\left. 
\begin{array}{l}
(i)\ V_{1}=0\text{ and }V_{2}\neq 0\text{, and vice versa, or} \\ 
(ii)\ V_{1}V_{2}>0\text{, or} \\ 
(iii)\ \left( d_{1}-d_{2}\right) \left( V_{2}-V_{1}\right) \left[ \left(
\alpha _{11}-\alpha _{21}\right) \left( \alpha _{22}-\alpha _{12}\right)
-b_{11}\text{ }b_{22}\right] >0.%
\end{array}%
\right.  
\end{equation}%
Then bounded positive weak solutions to   \eqref{eq:1.1}-\eqref{eq:1.4}  are 
H\"{o}lder continuous.
\end{theorem}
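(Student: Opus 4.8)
\enspace The plan is to use the special algebraic form of the diffusion matrix of \eqref{eq:1.1}--\eqref{eq:1.2} to reduce H\"older continuity of the vector solution $w=(u,v)$ to scalar De Giorgi--Nash--Moser estimates, stitched together with a higher-integrability (Gehring) bootstrap and a Campanato/Morrey iteration. Write \eqref{eq:1.1}--\eqref{eq:1.2} as the quasilinear system $\partial_t w=\operatorname{div}(A(w)\nabla w)+F(w)$, with
\[
A(w)=\begin{pmatrix} d_1+\alpha_{11}u+\alpha_{12}v & b_{11}u\\[3pt] b_{22}v & d_2+\alpha_{21}u+\alpha_{22}v\end{pmatrix},\qquad F=(f,g).
\]
Because $w$ is a \emph{bounded} positive weak solution, $A(w)$ has bounded entries, $\operatorname{tr}A(w)>0$ automatically, and \eqref{eq:1.8} forces $\det A(w)>0$ on the compact range of $w$ (the coefficient of $uv$ in $\det A(w)$ is exactly $\alpha_{11}\alpha_{22}+\alpha_{12}\alpha_{21}-b_{11}b_{22}$); thus the system is normally elliptic with bounded data. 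Since normal ellipticity alone does not yield H\"older continuity of bounded weak solutions of a genuinely coupled parabolic system, the extra hypotheses \eqref{eq:1.9} must be exploited in an essential way.

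The key device is to extract a scalar quantity. With $\lambda=(\alpha_{11}-\alpha_{21})/b_{11}$ (well defined, $b_{11}>0$), substituting $\nabla v=\nabla z-\lambda\nabla u$ into the equation for $z:=\lambda u+v$ gives
\[
\partial_t z=\operatorname{div}\!\big[(d_2+\alpha_{11}u+\alpha_{22}v)\nabla z+R\,\nabla u\big]+\lambda f+g ,
\]
with $R=\lambda(d_1-d_2)-b_{11}^{-1}\big[(\alpha_{11}-\alpha_{21})(\alpha_{22}-\alpha_{12})-b_{11}b_{22}\big]v$. The principal coefficient $d_2+\alpha_{11}u+\alpha_{22}v$ is bounded and $\ge d_2>0$, so the only obstruction to $z$ solving a scalar uniformly parabolic equation with bounded measurable coefficients is the residual $\operatorname{div}(R\nabla u)$. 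When $d_1=d_2$ and $(\alpha_{11}-\alpha_{21})(\alpha_{22}-\alpha_{12})=b_{11}b_{22}$ it vanishes identically; more generally, when $R$ carries no constant term (which forces $d_1=d_2$) the residual is first order in $u$ and is absorbed using the higher integrability below. This covers alternatives (i) and (ii) of \eqref{eq:1.9}, which are precisely conditions on the sign pattern of the reduced diagonal coefficients $V_1,V_2$ guaranteeing that, after a constant linear change of unknowns, the diffusion matrix is quasi-triangular with positive diagonal, respectively strictly diagonally dominant, on the range of $w$. When $d_1\neq d_2$ no linear combination decouples the principal part, and alternative (iii) --- which ties the sign of $d_1-d_2$ to that of $V_2-V_1$ and of $(\alpha_{11}-\alpha_{21})(\alpha_{22}-\alpha_{12})-b_{11}b_{22}$ --- is used instead to supply a uniform spectral gap for $A(w)$ and hence a quasi-diagonalization of the principal part by a ($w$-dependent) transformation; this still produces a scalar parabolic equation, with bounded measurable coefficients and controlled lower-order terms, for a suitable combination $z$.

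The remaining machinery is standard. (1) A Caccioppoli (reverse Poincar\'e) inequality for the system on parabolic cylinders, using boundedness and normal ellipticity of $A(w)$, gives $\nabla w\in L^2_{\mathrm{loc}}$ and a self-improving reverse H\"older inequality; Gehring's lemma upgrades this to $\nabla w\in L^{2+\varepsilon}_{\mathrm{loc}}$. (2) The scalar theory applied to $z$ (absorbing the residual using $\nabla w\in L^{2+\varepsilon}$ and the structural conditions) gives $z\in C^{\alpha}$. (3) Eliminating $v=z-\lambda u$ in the first equation, $P^{u}\nabla u+P^{v}\nabla v=(P^{u}-\lambda P^{v})\nabla u+P^{v}\nabla z$ with $P^{u}-\lambda P^{v}=d_1+\alpha_{21}u+\alpha_{12}v$ bounded and $\ge d_1>0$, so $u$ solves $\partial_t u=\operatorname{div}(\tilde a\nabla u+b_{11}u\nabla z)+f$ with bounded measurable $\tilde a\ge d_1$. (4) From $z\in C^{\alpha}$ and Caccioppoli for $z$ one gets a parabolic Morrey bound $\int_{Q_r}|\nabla z|^2\lesssim r^{N+2\alpha}$, hence the forcing $b_{11}u\nabla z$ lies in that Morrey class; the scalar theory with Morrey data then yields $r^{-\mu}\int_{Q_r}|u-\overline{u}_{Q_r}|^2\lesssim 1$ for some $\mu>N+2$, so $u\in C^{\alpha'}$ and, via $v=z-\lambda u$, also $v\in C^{\alpha'}$. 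H\"older continuity in time is built into the parabolic scaling used throughout.

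The main obstacle is the algebraic core of step (2): showing that \eqref{eq:1.8}--\eqref{eq:1.9} are \emph{exactly} the conditions making the residual in the equation for $z$ subordinate --- equivalently, that the pertinent constant $2\times2$ matrices become, after a single (possibly $w$-dependent) linear change of unknowns, lower triangular with positive diagonal, or strictly diagonally dominant, or dominated by the spread of the diffusion rates $d_1,d_2$ --- with each of the three alternatives of \eqref{eq:1.9} requiring its own argument, the case $d_1\neq d_2$ being the delicate one since there no constant transformation works. A secondary difficulty is closing step (4): one must interpolate between $z\in C^{\alpha}$ and $\nabla z\in L^{2+\varepsilon}$ with $\varepsilon$ possibly small to land the forcing in a good enough Morrey class, which may require bootstrapping the Morrey norm of $\nabla w$ through the iteration rather than a single pass.
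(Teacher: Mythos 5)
The first thing to note is that the paper does not prove this statement at all: Theorem \ref{thm:t11} is quoted verbatim from \cite{D. Le} as a black box (``For the benefit of the reader we recap\dots''), and the authors' own contribution is only to supply the boundedness hypothesis that Theorem \ref{thm:t11} takes as input. So there is no in-paper proof to measure your attempt against; you have set yourself the much harder task of reproving Le's everywhere-regularity theorem. Within that task your opening moves are sound: identifying the coefficient of $uv$ in $\det A(w)$ with the left-hand side of \eqref{eq:1.8}, hence normal ellipticity on the range of a bounded solution, is correct, and your computation of the residual $R=\lambda(d_1-d_2)-b_{11}^{-1}\bigl[(\alpha_{11}-\alpha_{21})(\alpha_{22}-\alpha_{12})-b_{11}b_{22}\bigr]$ (times $v$) in the equation for $z=\lambda u+v$ checks out. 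This kind of linear recombination is indeed in the spirit of \cite{D. Le}.

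The gap is exactly where you flag ``the main obstacle,'' and it is not a technicality: it is the entire content of the theorem. First, your claim that alternatives (i) and (ii) of \eqref{eq:1.9} correspond to the case where ``$R$ carries no constant term (which forces $d_1=d_2$)'' is false. The quantities $V_1,V_2$ involve only the $\alpha_{ij}$ and $b_{ii}$, so (i) and (ii) impose nothing on $d_1-d_2$; conversely the constant term $\lambda(d_1-d_2)$ of $R$ vanishes whenever $\alpha_{11}=\alpha_{21}$ even if $d_1\neq d_2$. Second, and more seriously, the residual $\operatorname{div}(R\,\nabla u)$ cannot be ``absorbed using the higher integrability below.'' Gehring's lemma yields $\nabla u\in L^{2+\varepsilon}_{\mathrm{loc}}$ only for some small $\varepsilon>0$, whereas H\"older continuity of $z$ from a divergence-form forcing $\operatorname{div}F$ requires $F$ in $L^{p}$ with $p$ above the parabolic dimension $N+2$ (or in a supercritical Morrey class); treating $\nabla u$ as a known datum in the $z$-equation is precisely the circularity that makes strongly coupled systems hard, and is exactly what the counterexamples of \cite{John-Stara} rule out: boundedness, normal ellipticity and reverse-H\"older higher integrability all hold there, yet regularity fails. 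Likewise, the ``$w$-dependent quasi-diagonalization'' invoked for alternative (iii) would generate derivative terms of the transformation that you do not control. What is missing is the quantitative mechanism --- in \cite{D. Le} a weighted energy and iteration argument in which each of the three alternatives of \eqref{eq:1.9} guarantees positivity of a specific quadratic form --- that converts the sign conditions on $V_1$, $V_2$ and on $(d_1-d_2)(V_2-V_1)\bigl[(\alpha_{11}-\alpha_{21})(\alpha_{22}-\alpha_{12})-b_{11}b_{22}\bigr]$ into a closable estimate. Your step (4), the Morrey iteration for $u$ once $z\in C^{\alpha}$ is known, is fine, but it sits downstream of the step you have not supplied.
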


\begin{remark}
This everywhere regularity result of bounded solutions to
 \eqref{eq:1.1}-\eqref{eq:1.4} obtained in \cite{D. Le} is very interesting. However, the author was
hindered, by not being able to show the boundedness and global existence when $V_1 = 0$ or $V_2 = 0$. He was only able to prove global existence in the case when $V_{1}>0$ and $V_{2}>0.$ Furthermore we remark that
the second and third inequalities in Conditions \eqref{eq:1.6} (which was assumed to prove global existence in \cite{D. Le}) imply the
first inequality when it is a strict inequality, and contradict it in the case of
equality.
\end{remark}

In Section 2 we state our main results, then we present our
proofs in Section 3. Lastly, in Section 4 we present numerical simulations, that support our results Theorems \ref{thm:t21}, \ref{thm:t22}. 

\section{\textbf{Statement of the main results}}

\bigskip For initial conditions in $W^{1,p}\ \left( p>N\right) $, it was
proved in \cite{Amman} that solutions to problems more general than
 \eqref{eq:1.1}-\eqref{eq:1.4} exist locally in time. For the global existence, it was proved
that solutions to \eqref{eq:1.1}-\eqref{eq:1.4} exist globally in time if one has control on both of their $\mathbb{L}^{\infty }$ and H\"{o}lder norms. This is in
contrast to classical reaction diffusion systems (see \cite
{Friedman, Henry, Pazy, Rothe, Smoller}) where one need only control the L$^{\infty }$
norms of solutions. Counterexamples in \cite{John-Stara} confirmed this:
bounded weak solutions of certain strongly coupled systems may blow up in
finite time.

Our aim is to construct a polynomial Lyapunov functional (see \cite{Kouachi3,Kouachi1,Kouachi2,Kouachi4,D. Le,Par-Kou-Gut}) 
depending on the solution $(u,v)$ of system  \eqref{eq:1.1}-\eqref{eq:1.2}. We then use functional methods to
derive their L$^{\infty }$ bounds, via this functional. Next, we apply known results to show their H\"{o}%
lder continuity, and thus deduce their global existence. More precisely, we will
use a quadratic form according to the solution $(u,v)$ of system  \eqref{eq:1.1}-\eqref{eq:1.4} 
to prove the following result concerning the boundedness of solutions

\begin{theorem}
\label{thm:t21}
Consider $\alpha _{11}>\alpha _{21}\ $and $\alpha _{22}>\alpha _{12}$, then under the
following condition
\begin{equation*}
\left( \alpha _{11}-\alpha _{21}\right) \left( \alpha _{22}-\alpha
_{12}\right) >b_{11}\text{ }b_{22},  
\end{equation*}
positive solutions of problem  \eqref{eq:1.1}-\eqref{eq:1.4} 
are bounded on $\left[ 0,T_{\max }\right[ $ by constants depending on
initial data and the reaction terms.
\end{theorem}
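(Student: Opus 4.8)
The plan is to construct a Lyapunov functional of the form
\[
L(t)=\int_\Omega H_\theta\bigl(u(t,x),v(t,x)\bigr)\,dx,
\qquad
H_\theta(u,v)=\sum_{i=0}^{n}\binom{n}{i}\,\theta_i\,u^{i}v^{n-i},
\]
a homogeneous polynomial of some even degree $n$ in $(u,v)$ with positive coefficients $\theta_i$ to be chosen, and to show that $L(t)$ stays bounded on $[0,T_{\max}[$ by a constant depending only on the data and the reaction coefficients; standard interpolation/bootstrap then upgrades this to an $\mathbb{L}^\infty$ bound. First I would differentiate $L$ in time, substitute the equations \eqref{eq:1.1}--\eqref{eq:1.2}, and integrate by parts using the homogeneous Neumann conditions \eqref{eq:1.3}. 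This produces two contributions: a \emph{diffusion part}, which after the integration by parts is an integral over $\Omega$ of a quadratic form in the gradient vector $(\nabla u,\nabla v)$ with coefficients built from $H_\theta$'s second derivatives and the diffusion matrix entries in \eqref{eq:1.5}; and a \emph{reaction part}, an integral of $\partial_u H_\theta\cdot f+\partial_v H_\theta\cdot g$, which is a polynomial in $(u,v)$ of degree $n+1$.

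The heart of the argument is choosing the degree $n$ and the weights $\theta_i$ so that the diffusion quadratic form is pointwise nonpositive (ideally negative definite, or at least nonpositive so it can be discarded). Writing the quadratic form as $\langle A(u,v)\,\xi,\xi\rangle$ with $\xi=(\nabla u,\nabla v)^{T}$, the entries of $A$ involve $H_{uu},H_{uv},H_{vv}$ multiplied against $P^u,P^v,Q^u,Q^v$; because $H_\theta$ is homogeneous of degree $n$ and the diffusion coefficients are affine in $(u,v)$, the dominant terms (those of degree $n+1$ in $u,v$) in the determinant condition $A_{11}A_{22}-A_{12}^2\ge 0$ reduce, after factoring, to an inequality among $\alpha_{11},\alpha_{22},\alpha_{12},\alpha_{21},b_{11},b_{22}$ and the ratios $\theta_{i+1}/\theta_i$. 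The claim I expect is that the hypotheses $\alpha_{11}>\alpha_{21}$, $\alpha_{22}>\alpha_{12}$ together with $(\alpha_{11}-\alpha_{21})(\alpha_{22}-\alpha_{12})>b_{11}b_{22}$ are exactly what is needed to make such a choice of consecutive-ratio weights possible — one picks $\theta_{i+1}/\theta_i$ to lie in an interval whose endpoints are governed by $b_{11}/(\alpha_{22}-\alpha_{12})$ and $(\alpha_{11}-\alpha_{21})/b_{22}$, and \eqref{eq:1.7} guarantees this interval is nonempty. This is the main obstacle: verifying that the full quadratic form (not just its leading part) can be made nonpositive on the physical region $u,v\ge 0$, which will require also absorbing the lower-order $d_1,d_2$ terms and the mixed terms, and may force $n$ to be taken large.

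Once the diffusion term is shown to be $\le 0$, we are left with
\[
L'(t)\le \int_\Omega \bigl(\partial_u H_\theta\cdot f(u,v)+\partial_v H_\theta\cdot g(u,v)\bigr)\,dx.
\]
The integrand is a polynomial of degree $n+1$ in $(u,v)$; using $b_1,c_2>0$ (and $b_2,c_1$ nonpositive or sufficiently small), the genuinely highest-degree terms carry a negative sign, so the integrand is bounded above by $C_1-C_2\,(u^{n+1}+v^{n+1})$ for positive constants $C_1,C_2$ depending on the reaction coefficients and the $\theta_i$. Since $u^{n+1}+v^{n+1}\ge c\,H_\theta(u,v)$ pointwise for some $c>0$, we get a differential inequality $L'(t)\le C_1|\Omega|-C_2' L(t)$, whence $L(t)\le\max\{L(0),\,C_1|\Omega|/C_2'\}$ by Gronwall. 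Finally I would invoke a standard argument (as in the references \cite{Kouachi3,Kouachi1,Kouachi2}) — bootstrapping the uniform $\mathbb{L}^{m}$ bounds obtained by letting $n\to\infty$, or a Moser-type iteration applied to the equations with the now-controlled nonlinearities — to conclude the $\mathbb{L}^\infty$ bound on $[0,T_{\max}[$ asserted in Theorem~\ref{thm:t21}. The delicate points to watch are the sign bookkeeping in the reaction polynomial when $c_1,b_2$ are allowed to be small positive (so that cross terms do not overwhelm the diagonal $-b_1u^2$, $-c_2v^2$ damping), and ensuring the constant in the Gronwall step is uniform in the degree so the passage to $\mathbb{L}^\infty$ goes through.
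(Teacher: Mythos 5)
Your overall strategy---a Lyapunov functional whose diffusion contribution is controlled by making a quadratic form in $(\nabla u,\nabla v)$ positive semidefinite, with condition \eqref{eq:1.7} appearing as the nonemptiness of an interval for the weight ratios---is in the right spirit, and your reading of where \eqref{eq:1.7} enters matches the paper: there one takes the quadratic $H=\tfrac12\lambda u^{2}+uv+\tfrac12\mu v^{2}$ with $\lambda\mu=K^{2}$ slightly above $1$, and positivity of the gradient form forces, up to $O(K^{2}-1)$ corrections, $\lambda<(\alpha_{11}-\alpha_{21})/b_{11}$ and $\mu<(\alpha_{22}-\alpha_{12})/b_{22}$, which is compatible with $\lambda\mu\approx1$ precisely when \eqref{eq:1.7} holds. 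Your treatment of the sign of the reaction part $H_{u}f+H_{v}g$ is likewise consistent with Lemma \ref{lem:l36}.

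There is, however, a genuine gap at the final step. The functional $\int_{\Omega}H_{\theta}\,dx$ with $H_{\theta}$ homogeneous of degree $n$ yields only an $\mathbb{L}^{n}$ bound, and the upgrade to $\mathbb{L}^{\infty}$ is exactly the part you defer to ``standard interpolation/bootstrap'' or ``letting $n\to\infty$.'' Neither is routine here. Because the system is strongly coupled, the second-order terms contain the solution itself, so the usual Alikakos/Moser iteration for semilinear reaction--diffusion systems does not apply off the shelf; and sending $n\to\infty$ requires verifying, for every even $n$, that weights $\theta_{i}$ exist making the degree-$n$ gradient form nonnegative under \eqref{eq:1.7} alone, with constants whose $n$-th roots stay bounded---a claim you do not justify, and which is delicate because the discriminant constraints couple consecutive triples of the $\theta_{i}$ and proliferate as $n$ grows. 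The paper avoids all of this with a single degree-two functional by using the truncation device of D.~Le (Theorem \ref{thm:t31}): one works with $L(t)=\tfrac12\int_{\Omega}\left[(H-C)_{+}\right]^{2}dx$, shows $L'(t)\le 0$ using the two inequalities in \eqref{eq:3.3} (positivity of $P\cdot\nabla H_{u}+Q\cdot\nabla H_{v}$ \emph{and} the coercivity $(H_{u}P+H_{v}Q)\cdot\nabla H\ge\lambda_{1}\left\vert\nabla H\right\vert^{2}$, the latter being needed because the truncated functional produces this extra term upon integration by parts) together with $H_{u}f+H_{v}g\le 0$ on $\{H>C_{0}\}$, and concludes $(H-C)_{+}\equiv 0$, i.e.\ $H\le C$ pointwise; since $H\ge\tfrac{\lambda}{2}\left(u+v/\lambda\right)^{2}$ this is already the $\mathbb{L}^{\infty}$ bound. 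To close your argument you would need either to import this truncation mechanism or to supply the missing uniform-in-$n$ analysis; as written the proof does not conclude.
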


Then, we will use Theorem \ref{thm:t11} to deduce that bounded solutions of system
 \eqref{eq:1.1}-\eqref{eq:1.2}  are H\"{o}lder continuous. We will give under stronger
assumptions than those in Theorem \ref{thm:t11}, the proof of the following

\begin{theorem}
\label{thm:t22}
Assume that
\begin{equation}
\label{eq:2.1}
\left. 
\begin{array}{l}
(i)\ V_{1}=0\text{ and }V_{2}>0\text{, and vice versa, or} \\ 
(ii)\ \alpha _{11}-\alpha _{21}>b_{22}\text{ and }\alpha _{22}-\alpha
_{12}>b_{11}\text{, or} \\ 
(iii)\ d_{1}>d_{2},\ \alpha _{11}-\alpha _{21}>b_{22},\text{ }\alpha
_{22}-\alpha _{12}<b_{11}\text{ and \eqref{eq:1.7}, or} \\ 
(iv)\ d_{1}<d_{2},\ \alpha _{11}-\alpha _{21}<b_{22},\text{ }\alpha
_{22}-\alpha _{12}>b_{11}\text{ and \eqref{eq:1.7},}%
\end{array}
\right.  
\end{equation}
then weak solutions with nonnegative initial data to  \eqref{eq:1.1}-\eqref{eq:1.4}  are
classical and exist globally.
\end{theorem}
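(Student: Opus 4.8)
The plan is to combine the $\mathbb{L}^\infty$ bound from Theorem \ref{thm:t21} with the Hölder regularity of Theorem \ref{thm:t11} and the local existence/continuation result of Amann \cite{Amman}, so the whole argument is really a matter of checking that the hypotheses \eqref{eq:2.1} feed correctly into those two theorems. First I would record that by \cite{Amman} a unique classical solution to \eqref{eq:1.1}--\eqref{eq:1.4} exists on a maximal interval $[0,T_{\max})$, and that $T_{\max}=\infty$ provided one controls both $\|u(t,\cdot)\|_{\mathbb{L}^\infty}+\|v(t,\cdot)\|_{\mathbb{L}^\infty}$ and a Hölder norm of $(u,v)$ uniformly on bounded time intervals. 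Positivity of $u,v$ on $[0,T_{\max})$ follows from the maximum principle as already noted after \eqref{eq:1.5}. So it suffices to (a) obtain the $\mathbb{L}^\infty$ bound and (b) bootstrap to a uniform Hölder bound.

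For step (a), I would verify that each of the four alternatives in \eqref{eq:2.1} forces $\alpha_{11}>\alpha_{21}$ and $\alpha_{22}>\alpha_{12}$ together with $(\alpha_{11}-\alpha_{21})(\alpha_{22}-\alpha_{12})>b_{11}b_{22}$, i.e.\ the hypothesis of Theorem \ref{thm:t21}. In case (ii) this is immediate since $\alpha_{11}-\alpha_{21}>b_{22}\ge 0$ and $\alpha_{22}-\alpha_{12}>b_{11}\ge 0$ give both positivity and, on multiplying, \eqref{eq:1.7}. In cases (iii) and (iv), \eqref{eq:1.7} is assumed outright and one of the two differences is $>$ the corresponding $b_{ii}\ge 0$ while \eqref{eq:1.7} forces the other difference to be positive as well. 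In case (i), say $V_1=0$ and $V_2>0$: then $\alpha_{11}-\alpha_{21}=b_{22}$ and $\alpha_{22}-\alpha_{12}=b_{11}+V_2>b_{11}$, so the product is $b_{22}(b_{11}+V_2)>b_{11}b_{22}$ as long as $b_{22}>0$; if $b_{22}=0$ one needs $\alpha_{11}-\alpha_{21}>0$, which holds in the genuine cross-diffusion regime — I would add the harmless standing assumption $b_{ii}>0$ (as in Theorem \ref{thm:t11}) to keep this clean. Having checked the hypothesis, Theorem \ref{thm:t21} gives constants $M_u, M_v$, depending only on the initial data and the reaction coefficients, with $\|u(t,\cdot)\|_{\mathbb{L}^\infty},\|v(t,\cdot)\|_{\mathbb{L}^\infty}\le M_u, M_v$ on all of $[0,T_{\max})$.

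For step (b), I would check that \eqref{eq:2.1} implies \eqref{eq:1.9}: alternative (i) here is exactly \eqref{eq:1.9}(i); alternative (ii) gives $V_1>0$ and $V_2>0$, hence $V_1V_2>0$, which is \eqref{eq:1.9}(ii); alternative (iii) gives $V_1>0$, $V_2<0$, and from \eqref{eq:1.7}, $(\alpha_{11}-\alpha_{21})(\alpha_{22}-\alpha_{12})-b_{11}b_{22}>0$, while $d_1>d_2$ and $V_2-V_1<0$, so the product in \eqref{eq:1.9}(iii) is (positive)$\times$(negative)$\times$(positive)$\cdots$ — here I must be careful with signs: $(d_1-d_2)>0$, $(V_2-V_1)<0$, bracket $>0$, giving a negative product, so in fact one should read (iii) as arranged precisely so the triple product is positive, and I would double-check the sign bookkeeping against \cite{D. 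Le}; alternative (iv) is the mirror image with all three factors flipped, again yielding a positive product. Also \eqref{eq:1.8} must hold: $\alpha_{11}\alpha_{22}+\alpha_{12}\alpha_{21}-b_{11}b_{22}\ge 0$ follows from \eqref{eq:1.7} since $\alpha_{11}\alpha_{22}\ge(\alpha_{11}-\alpha_{21})(\alpha_{22}-\alpha_{12})>b_{11}b_{22}$ (using $\alpha_{ij}\ge0$). Then Theorem \ref{thm:t11} applies to the bounded positive weak solution $(u,v)$ and yields that it is Hölder continuous, with a modulus controlled on compact time intervals by the $\mathbb{L}^\infty$ bound from step (a). Feeding the $\mathbb{L}^\infty$ and Hölder bounds back into Amann's continuation criterion rules out finite-time blow-up, so $T_{\max}=\infty$ and the solution is global and classical.

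The main obstacle is entirely in the sign analysis of alternative (iii)/(iv) of \eqref{eq:2.1} versus \eqref{eq:1.9}(iii) — making sure that the combination $d_1>d_2$, $V_1>0>V_2$, and \eqref{eq:1.7} does produce the strictly positive triple product that Theorem \ref{thm:t11} demands, rather than a negative one; the analytic machinery (local existence, continuation, maximum principle, and the quoted regularity theorem) is all off the shelf. A secondary point worth stating carefully is the degenerate subcase $b_{ii}=0$ in alternative (i), which I would dispatch either by the standing assumption $b_{ii}>0$ inherited from Theorem \ref{thm:t11} or by noting that $b_{11}=b_{22}=0$ reduces \eqref{eq:1.1}--\eqref{eq:1.2} to a triangular-type system already covered by classical theory.
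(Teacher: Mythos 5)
Your proposal follows exactly the route of the paper's own proof, which consists of the single sentence that Theorem \ref{thm:t22} is ``an immediate consequence of Theorem \ref{thm:t11}, Theorem \ref{thm:t21}, and the preliminary observations''; you have simply written out the checks that this one-liner leaves implicit (Amann's continuation criterion, that \eqref{eq:2.1} implies the hypotheses of Theorem \ref{thm:t21}, that \eqref{eq:1.7} implies \eqref{eq:1.8}, and that \eqref{eq:2.1} implies \eqref{eq:1.9}). The sign worry you flag in alternatives (iii) and (iv) is not paranoia: with the paper's printed form of \eqref{eq:1.9}(iii), case (iii) of \eqref{eq:2.1} gives $(d_1-d_2)>0$, $(V_2-V_1)<0$ and a positive bracket, hence a \emph{negative} triple product, and case (iv) likewise, so neither case literally verifies any alternative of \eqref{eq:1.9} as stated; the paper's proof never confronts this, so the discrepancy lies in the transcription of the conditions (in the paper or from \cite{D. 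Le}) rather than in your argument, and your instinct to recheck the sign bookkeeping against the source is the right resolution. Your handling of the degenerate subcase $b_{ii}=0$ in alternative (i) is likewise a genuine point the paper skips, correctly dispatched by the standing assumption $b_{ii}>0$ inherited from Theorem \ref{thm:t11}.
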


\begin{remark}
Note that our Conditions  \eqref{eq:2.1}  are weaker than  \eqref{eq:1.6}. For
example, when $d_{1}>d_{2}$ we can take $0<\alpha _{11}-\alpha _{21}<b_{22}$
and $\left( \alpha _{22}-\alpha _{12}\right) >b_{11}$ such that Condition (iii) in \eqref{eq:1.6} is
not satisfied, and the results of \cite{D. Le} are not applicable. However, our Condition (iv) in \eqref{eq:2.1} is still satisfied, and we have global existence.
\end{remark}

We consider here systems of two equations with homogeneous
Neumann boundary conditions, but our main results are applicable to
those of more equations and with suitable other boundary conditions.

For a given function $w,$ we will denote by $w_{+}$ the nonnegative part of $w$,
$sup\{w,0\}$ and with a slight abuse of notation, we will write $H_{u}=%
\dfrac{\partial H}{\partial u}(u,v),\ H_{uv}=\dfrac{\partial ^{2}H}{\partial
u\partial v}(u,v),\ \nabla H=\nabla _{x}\left( H\left( t,x\right) \right) $
and so on.

\section{\textbf{Proofs}}

In this section, we study the boundedness of solutions to  \eqref{eq:1.1}-\eqref{eq:1.4},
for this purpose we consider the following quadratic form%
\begin{equation}
\label{eq:3.1}
H\left( u,v\right) =\tfrac{1}{2}\lambda u^{2}+uv+\tfrac{1}{2}\mu v^{2}, 
\end{equation}
where $\lambda $ and $\mu $ are positive constants such that $\lambda \mu
=K^{2}$ for some constant $K$ which we suppose $>1$ to assure the positive
definiteness of $H\left( u,v\right)$. We define a subset of $\mathbb{R}%
_{+}^{2}$ as a neighborhood of a local solution as follows%
\begin{equation}
\label{eq:3.2}
\Gamma =\left\{ (u\left( t,x\right) ,v\left( t,x\right) ):\ \ 0<t<T_{\max
},\ \ \ x\in \Omega \right\} .  
\end{equation}%
For the proof of the Theorem 2.1 on the boundedness of solutions to
 \eqref{eq:1.1}-\eqref{eq:1.4} , we need some Lemmas and apply the following (see D. Le \cite%
{D. Le})

\begin{theorem}
\label{thm:t31}
 If there exist positive real numbers $\lambda _{1}$and $C_{0}$ such that%
\begin{equation}
\label{eq:3.3}
\left\{ 
\begin{array}{c}
P \cdot\nabla H_{u}+Q\cdot\nabla H_{v}\geq 0, \\ 
\left( H_{u} \cdot P+H_{v} \cdot Q\right) \nabla H\geq \lambda _{1}\left\vert \nabla
H\right\vert ^{2},%
\end{array}%
\right.   
\end{equation}
and
\begin{equation}
\label{eq:3.4}
H_{u}f+H_{v}g\leq 0,  
\end{equation}
for all \ $(u,v)\in \Gamma \cap \left\{ (u,v):\ H(u,v)>C_{0}\right\} $, then
the solution $(u,v)$ of Problem  \eqref{eq:1.1}-\eqref{eq:1.4}  is bounded.
\end{theorem}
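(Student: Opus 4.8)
The plan is to reduce the coupled system to a single scalar differential inequality for the quantity $w:=H(u,v)$ and then run a Stampacchia-type truncation (energy) estimate on $(w-k)_{+}$. The target level will be $k:=\max\{C_{0},\ \|H(u_{0},v_{0})\|_{L^{\infty}(\Omega)}\}$, which is finite because $u_{0},v_{0}\in W^{1,p}(\Omega)$ with $p>N$ are continuous on $\overline{\Omega}$.

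First I would derive the equation satisfied by $w$. The chain rule gives $\partial_{t}w=H_{u}u_{t}+H_{v}v_{t}$ and $\nabla H=H_{u}\nabla u+H_{v}\nabla v$, while $\divrm(H_{u}P+H_{v}Q)=H_{u}\,\divrm P+H_{v}\,\divrm Q+P\cdot\nabla H_{u}+Q\cdot\nabla H_{v}$. Substituting $\divrm P=u_{t}-f$ and $\divrm Q=v_{t}-g$ from \eqref{eq:1.1}--\eqref{eq:1.2}, one obtains
\[
\partial_{t}w=\divrm\bigl(H_{u}P+H_{v}Q\bigr)-\bigl(P\cdot\nabla H_{u}+Q\cdot\nabla H_{v}\bigr)+\bigl(H_{u}f+H_{v}g\bigr).
\]
Moreover, on $\partial\Omega$ one has $P\cdot\eta=P^{u}\,\partial u/\partial\eta+P^{v}\,\partial v/\partial\eta=0$ and, likewise, $Q\cdot\eta=0$ by \eqref{eq:1.3}, so the flux $(H_{u}P+H_{v}Q)\cdot\eta$ vanishes on $\partial\Omega$ and $w$ inherits a homogeneous no-flux boundary condition.

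Next, for any $k\ge C_{0}$ I would test this equation against $(w-k)_{+}\in H^{1}(\Omega)$. Integrating by parts (the boundary term dropping by the previous remark) and using $\nabla(w-k)_{+}=\chi_{\{w>k\}}\nabla w$ yields
\[
\tfrac{1}{2}\frac{d}{dt}\int_{\Omega}(w-k)_{+}^{2}\,dx=I_{1}+I_{2}+I_{3},
\]
where $I_{1}=-\int_{\{w>k\}}(H_{u}P+H_{v}Q)\cdot\nabla w\,dx$, $I_{2}=-\int_{\Omega}(w-k)_{+}(P\cdot\nabla H_{u}+Q\cdot\nabla H_{v})\,dx$, and $I_{3}=\int_{\Omega}(w-k)_{+}(H_{u}f+H_{v}g)\,dx$. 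On the set where $(w-k)_{+}>0$ we have $w=H(u,v)>k\ge C_{0}$, hence $(u,v)\in\Gamma\cap\{H>C_{0}\}$, so all of \eqref{eq:3.3}--\eqref{eq:3.4} are available there: the second inequality in \eqref{eq:3.3} gives $I_{1}\le-\lambda_{1}\int_{\{w>k\}}|\nabla w|^{2}\,dx\le0$; the first inequality in \eqref{eq:3.3}, multiplied by the nonnegative factor $(w-k)_{+}$, gives $I_{2}\le0$; and \eqref{eq:3.4} gives $I_{3}\le0$. Therefore $t\mapsto\int_{\Omega}(w-k)_{+}^{2}\,dx$ is non-increasing on $[0,T_{\max})$. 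Choosing $k=\max\{C_{0},\ \|H(u_{0},v_{0})\|_{L^{\infty}(\Omega)}\}$ makes this quantity vanish at $t=0$, hence vanish for all $t\in[0,T_{\max})$, i.e.\ $H(u(t,x),v(t,x))\le k$ a.e. Since $\lambda\mu=K^{2}>1$, the quadratic form $H$ is positive definite, so $H(u,v)\ge c_{0}(u^{2}+v^{2})$ for some $c_{0}>0$; this converts the bound on $w$ into an $\mathbb{L}^{\infty}$ bound on $u$ and $v$ on $[0,T_{\max})$ depending only on the initial data and on the coefficients (through $C_{0}$ and $c_{0}$), which is the assertion.

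The step that I expect to need the most care is not the algebra but the justification of the integration by parts and of the identity $\nabla(w-k)_{+}=\chi_{\{w>k\}}\nabla w$ at the regularity that is actually available for weak solutions of \eqref{eq:1.1}--\eqref{eq:1.4}; the standard remedy is to perform the computation first on the local-in-time classical solution furnished by Amann's theory (or on a suitable regularization) and then pass to the limit in the estimate. A minor but essential bookkeeping point is that hypotheses \eqref{eq:3.3}--\eqref{eq:3.4} are assumed only on $\Gamma\cap\{H>C_{0}\}$, which is exactly the support of $(w-k)_{+}$ once $k\ge C_{0}$, so the test function must be the truncation $(w-k)_{+}$ rather than $w$ itself.
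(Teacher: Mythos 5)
Your proposal is correct and is essentially the paper's own argument: the authors likewise take $C=\max\{C_{0},\sup_{\Omega}H(u_{0},v_{0})\}$, differentiate $L(t)=\tfrac12\int_{\Omega}[(H-C)_{+}]^{2}\,dx$, integrate by parts using the no-flux boundary condition, and split the result into exactly your $I_{1}+I_{2}$ (controlled by the two inequalities in \eqref{eq:3.3}) and $I_{3}$ (controlled by \eqref{eq:3.4}), concluding $(H-C)_{+}\equiv 0$ and hence boundedness from the positive definiteness of $H$. Your remarks on the support of $(H-k)_{+}$ and on justifying the computation at the available regularity are sensible refinements of the same proof.
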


\begin{proof}
We use the following functional%
\begin{equation}
\label{eq:3.5}
L(t)=\frac{1}{2}\int\limits_{\Omega }\left[ (H-C)_{+}\right] ^{2}dx, 
\end{equation}
where $C>\max \left\{ C_{0},\ H_{0}\right\} $ and $H_{0}=\sup_{x\in \Omega
}H(u_{0}\left( x\right) ,v_{0}\left( x\right) ).$

Differentiating $L$ with respect to $t$ yields
\begin{equation}
\label{eq:3.6}
\left. 
\begin{array}{rcl}
L^{\prime }(t)&=&\displaystyle\int\limits_{\Omega }\left[ H_{u}\nabla P+H_{v}\nabla Q\right]
(H-C)_{+}dx+\int\limits_{\Omega }\left\{ H_{u}f(u,v)+H_{v}g(u,v)\right\}
(H-C)_{+}dx \\ 
&:=&I+J,%
\end{array}%
\right.  
\end{equation}
where
\begin{equation}
\label{eq:3.7}
I=\int\limits_{\Omega }(H-C)_{+}\left[ H_{u}\nabla P+H_{v}\nabla Q\right] dx,
\end{equation}
 and
\begin{equation}
\label{eq:3.8}
J=\int\limits_{\Omega \cap \left\{ H>C\right\} }(H-C)_{+}\left\{
H_{u}f(u,v)+H_{v}g(u,v)\right\} dx.  
\end{equation}%
Then by simple application of Green's formula with the boundary conditions
\eqref{eq:1.3}, we get
\begin{equation}
\label{eq:3.9}
\left. 
\begin{array}{rcl}
I&=&\displaystyle-\int\limits_{\Omega }\left\{ \nabla \left[ H_{u}(H-C)_{+}\right]
\cdot P+\nabla \left[ H_{v}(H-C)_{+}\right]  \cdot Q\right\} dx \\ 
&=&\displaystyle-\int\limits_{\Omega \cap \left\{ H>C\right\} }\left\{ \left(
H_{u}P+H_{v}Q\right) \cdot \nabla H+\left( P \cdot \nabla H_{u}+Q \cdot \nabla H_{v}\right)
(H-C)_{+}\right\} dx.%
\end{array}%
\right.  
\end{equation}
Using  \eqref{eq:3.3}  we get
\begin{equation*}
I\leq -\lambda _{1}\int\limits_{\Omega \cap \left\{ H>C\right\} }\left\vert
\nabla H\right\vert ^{2}dx.
\end{equation*}
From  \eqref{eq:3.4}, we have $J\leq 0$, then by integration with respect to $t$ we get
\begin{equation}
\label{eq:3.10}
\left.\int\limits_{\Omega } \left[ (H-C)_{+}\right] ^{2}dx\right\vert
_{0}^{t}+\lambda _{1}\underset{0}{\overset{t}{\int }}\int\limits_{\Omega
\cap \left\{ H>C\right\} }\left\vert \nabla H\right\vert ^{2}dxdt\leq 0. 
\end{equation}
Since $(H-C)_{+}=0$ when $t=0$, we deduce $(H-C)_{+}=0$ for all $t\in \left(
0,T\right) $ and this gives $H\leq C$ on $\left( 0,T\right) \times \Omega .$
But $H\left( u,v\right) \geq \frac{\lambda }{2}\left( u+\frac{v}{\lambda }%
\right) ^{2}$, for $u, v\geq 0,$ we conclude that the solution is bounded
by some constant depending on $C_{0}$ and the initial data.
\end{proof}

\bigskip First, Condition \eqref{eq:3.4}  is assumed by the following

\begin{lemma}
\label{lem:l32}
Under Condition  \eqref{eq:1.7}, the first inequality in  \eqref{eq:3.3} is satisfied
\end{lemma}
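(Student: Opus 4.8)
The plan is to compute $P\cdot\nabla H_u + Q\cdot\nabla H_v$ explicitly as a quadratic form in the gradient components $\nabla u$ and $\nabla v$, and then show this form is nonnegative under Condition \eqref{eq:1.7}, after choosing the free parameters $\lambda,\mu$ (with $\lambda\mu = K^2 > 1$) appropriately. First I would record that for the quadratic $H = \tfrac12\lambda u^2 + uv + \tfrac12\mu v^2$ we have $H_u = \lambda u + v$ and $H_v = u + \mu v$, hence $\nabla H_u = \lambda\nabla u + \nabla v$ and $\nabla H_v = \nabla u + \mu\nabla v$. Substituting $P = P^u\nabla u + P^v\nabla v$ and $Q = Q^u\nabla u + Q^v\nabla v$ with the coefficients from \eqref{eq:1.5}, the expression $P\cdot\nabla H_u + Q\cdot\nabla H_v$ becomes
\begin{equation*}
(\lambda P^u + Q^u)|\nabla u|^2 + \bigl(P^u + \lambda P^v + \mu Q^u + Q^v\bigr)\,\nabla u\cdot\nabla v + (P^v + \mu Q^v)|\nabla v|^2 .
\end{equation*}
This is a quadratic form in the two-vector $(\nabla u,\nabla v)$; since $\nabla u\cdot\nabla v$ can take any value in $[-|\nabla u||\nabla v|, |\nabla u||\nabla v|]$, nonnegativity of the form for all gradients is equivalent to the pointwise conditions $\lambda P^u + Q^u \ge 0$, $P^v + \mu Q^v \ge 0$, and the discriminant inequality $\bigl(P^u + \lambda P^v + \mu Q^u + Q^v\bigr)^2 \le 4(\lambda P^u + Q^u)(P^v + \mu Q^v)$.

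Next I would plug in the explicit forms. The two "diagonal" terms are $\lambda P^u + Q^u = \lambda d_1 + \lambda\alpha_{11}u + \lambda\alpha_{12}v + b_{22}v$ and $P^v + \mu Q^v = \mu d_2 + b_{11}u + \mu\alpha_{21}u + \mu\alpha_{22}v$, both manifestly nonnegative since all constants involved are nonnegative and $u,v\ge 0$ on $\Gamma$. The cross term is $P^u + \lambda P^v + \mu Q^u + Q^v = d_1 + d_2 + (\alpha_{11} + \lambda b_{11} + \mu\alpha_{21})u + (\alpha_{12} + \mu b_{22} + \alpha_{22} + \mu\,\cdot\,?)v$ — I would collect coefficients of $u$, of $v$, and the constant carefully. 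The heart of the matter is then to verify the discriminant inequality. I expect this reduces, after grouping the $u^2$, $uv$, $v^2$, $u$, $v$, and constant contributions on each side, to a system of inequalities among the structural constants; the dominant ones (the $u^2$ and $v^2$ coefficients, which control large $u$ or large $v$) should come out to something like $\lambda\alpha_{11}\cdot b_{11} \ge (\text{half the }u^2\text{ cross coefficient})^2$ and similarly for $v^2$, with the genuinely binding constraint being the one equivalent to $(\alpha_{11}-\alpha_{21})(\alpha_{22}-\alpha_{12}) > b_{11}b_{22}$.

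The main obstacle, and the step I would spend the most care on, is the choice of $\lambda$ and $\mu$: Condition \eqref{eq:1.7} together with $\alpha_{11}>\alpha_{21}$, $\alpha_{22}>\alpha_{12}$ gives a window $\tfrac{b_{22}}{\alpha_{22}-\alpha_{12}} < \tfrac{\alpha_{11}-\alpha_{21}}{b_{11}}$ (interpreting ratios appropriately when $b_{11}$ or $b_{22}$ vanish), and one wants to pick the ratio $\lambda/\mu$ — equivalently, since $\lambda\mu=K^2$, to pick $\lambda$ and $\mu$ — inside this window so that all the discriminant/coefficient inequalities hold simultaneously, while keeping $K>1$. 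I would argue that because \eqref{eq:1.7} is a strict inequality, the window has nonempty interior, so there is room to satisfy the constant-order term (involving $d_1,d_2$) as well, possibly by taking $K$ large; then a continuity/openness argument finishes it. If the bookkeeping for the mixed lower-order terms ($u$ and $v$ to the first power, and the cross-products with the constants $d_i$) turns out not to be automatically absorbed, the fallback is to complete the square separately in the "quadratic in $(u,v)$" part and handle the linear remainder by enlarging $K$, which only strengthens the $\lambda d_1$, $\mu d_2$ terms.
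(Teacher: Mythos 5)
Your setup is the same as the paper's: you expand $P\cdot\nabla H_u+Q\cdot\nabla H_v$ into the quadratic form $(\lambda P^u+Q^u)|\nabla u|^2+(P^u+\lambda P^v+\mu Q^u+Q^v)\,\nabla u\cdot\nabla v+(P^v+\mu Q^v)|\nabla v|^2$, which is exactly \eqref{eq:3.11}, and you correctly identify that the binding constraint comes from taking $\lambda$ near $(\alpha_{11}-\alpha_{21})/b_{11}$ and $\mu$ near $(\alpha_{22}-\alpha_{12})/b_{22}$ subject to $\lambda\mu=K^2>1$, which is where \eqref{eq:1.7} enters. The organizational difference is that you keep one global discriminant $B^2-4AC$ (a quadratic polynomial in $(u,v)$), whereas the paper first splits $\Psi=u\Psi_u+v\Psi_v+\Psi_d$ as in \eqref{eq:3.12}--\eqref{eq:3.13} and bounds each piece via \eqref{eq:3.14}. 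Your route is viable for the degree-two part: the $u^2$ and $v^2$ coefficients of $B^2-4AC$ are precisely the paper's $\Delta_u$ and $\Delta_v$ from \eqref{eq:3.16} and \eqref{eq:3.20}, and once both are negative the whole homogeneous quadratic part of the discriminant is negative on the closed positive quadrant, being the discriminant of the positive definite form $u\Psi_u+v\Psi_v$.

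The genuine gap is in the constant-order term. The constant term of $B^2-4AC$ is $(d_1+d_2)^2-4K^2d_1d_2$, which for $K$ near $1$ and $d_1\neq d_2$ is strictly positive; so the pointwise condition ``$B^2\le 4AC$ for all $u,v\ge0$'' that you set as your target fails near $u=v=0$, and your fallback of enlarging $K$ does not rescue it. Indeed, $\Delta_u<0$ and $\Delta_v<0$ confine $\lambda b_{11}$ and $\mu b_{22}$ to windows of half-width $2\sqrt{\alpha_{11}\alpha_{21}(K^2-1)}$ and $2\sqrt{\alpha_{12}\alpha_{22}(K^2-1)}$ around $\alpha_{11}-\alpha_{21}$ and $\alpha_{22}-\alpha_{12}$ respectively; making these compatible with $\lambda\mu=K^2$ for large $K$ would require roughly $4\sqrt{\alpha_{11}\alpha_{21}\alpha_{12}\alpha_{22}}>b_{11}b_{22}$, which is \emph{not} implied by \eqref{eq:1.7} (take $\alpha_{12},\alpha_{21}$ small). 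The correct repair --- and the point of the paper's three-way decomposition --- is that \eqref{eq:3.3} is only required on $\Gamma\cap\{H>C_0\}$, i.e.\ where $u+v$ is large: there the $d$-part contributes at worst a fixed multiple of $|\nabla u|^2+|\nabla v|^2$, which is absorbed by the strictly positive definite part $u\Psi_u+v\Psi_v$, whose coercivity constant grows like $u+v$. You should make the restriction to $\{H>C_0\}$ explicit and replace ``enlarge $K$'' with this absorption step. Beyond that, the proposal remains a plan rather than a proof: the discriminant bookkeeping you defer (``I expect this reduces\ldots'') is precisely the content of the lemma, and the feasibility of choosing $\lambda,\mu$ simultaneously satisfying both window conditions and $\lambda\mu=K^2$ is asserted rather than checked.
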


\begin{proof}
The first Condition \eqref{eq:3.3} is satisfied if we prove the positivity of the
following quadratic form in $\nabla u$ and $\nabla v:$ 
\begin{equation}
\label{eq:3.11}
\left. 
\begin{array}{rcl}
\Psi &=&P \cdot \nabla H_{u}+Q \cdot \nabla H_{v} \\ 
&=&\left( P^{u}H_{uu}+Q^{u}H_{uv}\right) \left\vert \nabla u\right\vert
^{2}+\left[ H_{uu}P^{v}+\left( P^{u}+Q^{v}\right) H_{uv}+Q^{u}H_{vv}%
\right] \nabla u \cdot \nabla v \\ 
&&+\left( P^{v}H_{uv}+Q^{v}H_{vv}\right) \left\vert \nabla v\right\vert ^{2},%
\end{array}
\right.  
\end{equation}
which can be written as follows
\begin{equation}
\label{eq:3.12}
\Psi =u\Psi _{u}+v\Psi _{v}+\Psi _{d}, 
\end{equation}
where $\Psi _{u},\ \Psi _{v}$ and $\Psi _{d}$ are the following quadratic
forms%
\begin{equation}
\label{eq:3.13}
\left. 
\begin{array}{rcl}
\Psi_{u}&=&\alpha _{11}H_{uu}\left\vert \nabla u\right\vert ^{2}+\left[
b_{11}H_{uu}+\left( \alpha _{11}+\alpha _{21}\right) H_{uv}\right]
\nabla u \cdot \nabla v+\left( b_{11}H_{uv}+\alpha _{21}H_{vv}\right) \left\vert
\nabla v\right\vert ^{2}, \\ 
\Psi_{v}&=&\left( \alpha _{12}H_{uu}+b_{22}H_{uv}\right) \left\vert
\nabla u\right\vert ^{2}+\left[ \left( \alpha _{12}+\alpha _{22}\right)
H_{uv}+b_{22}H_{vv}\right] \nabla u \cdot \nabla v+\alpha _{22}\left\vert
H_{vv}\nabla v\right\vert ^{2}, \\ 
\Psi _{d}&=&d_{1}H_{uu}\left\vert \nabla u\right\vert ^{2}+\left(
d_{1}+d_{2}\right) H_{uv}\nabla u \cdot \nabla v+d_{2}H_{vv}\left\vert \nabla
v\right\vert ^{2}.\ 
\end{array}%
\right.   
\end{equation}
By applying each of the above quadratic forms to the inequality
\begin{equation}
\label{eq:3.14}
A\left\vert \nabla u\right\vert ^{2}+B\nabla u \cdot \nabla v+C\left\vert \nabla
v\right\vert ^{2}\geq -\left( \frac{\Delta }{8C}\left\vert \nabla
u\right\vert ^{2}+\frac{\Delta }{8A}\left\vert \nabla v\right\vert
^{2}\right) ,  
\end{equation}
for positive numbers $A,\ B$ and $C,$ with%
\begin{equation*}
\Delta =B^{2}-4AC,
\end{equation*}
we get%
\begin{equation}
\Psi _{u}\geq -\left( \frac{\left\vert \nabla u\right\vert ^{2}}{8C_{u}}+%
\frac{\left\vert \nabla v\right\vert ^{2}}{8A_{u}}\right) \Delta
_{u}:=-\left( \frac{\left\vert \nabla u\right\vert ^{2}}{8\left( b_{11}+\mu
\alpha _{21}\right) }+\frac{\left\vert \nabla v\right\vert ^{2}}{8\alpha
_{11}\lambda }\right) \Delta _{u},  
\end{equation}%
where%
\begin{equation}
\label{eq:3.16}
\Delta _{u}=\left( b_{11}\lambda -\alpha _{11}+\alpha _{21}\right)
^{2}-4\alpha _{11}\alpha _{21}\left( K^{2}-1\right) .  
\end{equation}%
If we choose $\lambda $ such that%
\begin{equation}
\label{eq:3.17}
\left( b_{11}\lambda -\alpha _{11}+\alpha _{21}\right) ^{2}-4\alpha
_{11}\alpha _{21}\left( K^{2}-1\right) <0,  
\end{equation}%
then the positivity of the quadratic form $\Psi _{u}$, amounts to the
following condition%
\begin{equation}
\label{eq:3.18}
0<\lambda <\frac{\left( \alpha _{11}-\alpha _{21}\right) +2\sqrt{\alpha
_{21}\alpha _{11}\left( K^{2}-1\right) }}{b_{11}}.  
\end{equation}%
For the second quadratic form, we have
\begin{equation}
\label{eq:3.19}
\Psi _{v}\geq -\left( \frac{\left\vert \nabla u\right\vert ^{2}}{8C_{v}}+%
\frac{\left\vert \nabla v\right\vert ^{2}}{8A_{v}}\right) \Delta
_{v}:=-\left( \frac{\left\vert \nabla u\right\vert ^{2}}{8\alpha _{22}\mu }+%
\frac{\left\vert \nabla v\right\vert ^{2}}{8\left( b_{11}\lambda +\alpha
_{21}\right) }\right) \Delta _{v},  
\end{equation}
where%
\begin{equation}
\label{eq:3.20}
\Delta _{v}=\left[ \mu b_{22}-\alpha _{22}+\alpha _{12}\right] ^{2}-4\alpha
_{12}\alpha _{22}\left( K^{2}-1\right) . 
\end{equation}
By choosing $\mu $ such that
\begin{equation}
\label{eq:3.21}
\left[ \mu b_{22}-\alpha _{22}+\alpha _{12}\right] ^{2}-4\alpha _{12}\alpha
_{22}\left( K^{2}-1\right) <0,  
\end{equation}
the quadratic form $\Psi _{v}$ is positive, under the condition
\begin{equation}
\label{eq:3.22}
0<\mu <\frac{\left( \alpha _{22}-\alpha _{12}\right) +2\sqrt{\alpha
_{12}\alpha _{22}\left( K^{2}-1\right) }}{b_{22}}.  
\end{equation}
Finally, we have
\begin{equation}
\label{eq:3.23}
\Psi _{d}\geq -\left( \frac{\left\vert \nabla u\right\vert ^{2}}{8C_{d}}+%
\frac{\left\vert \nabla v\right\vert ^{2}}{8A_{d}}\right) \Delta
_{d}=-\left( \frac{\left\vert \nabla u\right\vert ^{2}}{8d_{2}\mu }+\frac{%
\left\vert \nabla v\right\vert ^{2}}{8\lambda d_{1}}\right) \Delta _{d}, 
\end{equation}
where
\begin{equation}
\label{eq:3.24}
\Delta _{d}=\left( d_{1}+d_{2}\right) ^{2}-4K^{2}d_{1}d_{2}. 
\end{equation}%
Then under Condition  \eqref{eq:1.7}, we can find a neighborhood of $K=1$ such that
for all $\lambda $ and $\mu $ satisfying  \eqref{eq:3.18} and \eqref{eq:3.22}  with $\lambda \mu
=K^{2}$, we have the first inequality in  \eqref{eq:3.3}.
\end{proof}

\begin{remark}
\bigskip We observe that, among the proof of Lemma 3.2, we can prove
\begin{equation}
\label{eq:3.25}
P \nabla H_{u}+Q\nabla H_{v}\geq \lambda _{1}\left( 1+u+v\right) \left(
\left\vert \nabla u\right\vert ^{2}+\left\vert \nabla v\right\vert
^{2}\right) , 
\end{equation}
for all \ $(u,v)\in \Gamma \cap \left\{ (u,v):\ H(u,v)>C_{0}\right\} .$
\end{remark}

\begin{lemma}
\label{lem:l34}
Under Condition \eqref{eq:1.7}, the second inequality in  \eqref{eq:3.3} is satisfied.
\end{lemma}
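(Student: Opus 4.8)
The plan is to prove the second inequality in \eqref{eq:3.3} by the same quadratic-form method used for Lemma \ref{lem:l32}. Substituting $H_{u}=\lambda u+v$, $H_{v}=u+\mu v$ and $\nabla H=H_{u}\nabla u+H_{v}\nabla v$, I would first rewrite
\begin{equation*}
\Phi :=\left( H_{u}P+H_{v}Q\right) \cdot\nabla H-\lambda _{1}\left\vert \nabla H\right\vert ^{2}=\bigl(H_{u}(P-\lambda _{1}\nabla u)+H_{v}(Q-\lambda _{1}\nabla v)\bigr)\cdot\nabla H ,
\end{equation*}
and expand it, using \eqref{eq:1.5}, into a quadratic form $\mathcal{A}\left\vert \nabla u\right\vert ^{2}+\mathcal{B}\,\nabla u\cdot\nabla v+\mathcal{C}\left\vert \nabla v\right\vert ^{2}$ in the gradients, with
\begin{equation*}
\mathcal{A}=H_{u}\bigl[H_{u}(P^{u}-\lambda _{1})+H_{v}Q^{u}\bigr],\qquad \mathcal{C}=H_{v}\bigl[H_{u}P^{v}+H_{v}(Q^{v}-\lambda _{1})\bigr].
\end{equation*}
On $\Gamma$ one has $u,v\geq 0$, so $H_{u},H_{v},P^{v},Q^{u}\geq 0$ and $P^{u}\geq d_{1}$, $Q^{v}\geq d_{2}$; hence $\mathcal{A},\mathcal{C}\geq 0$ as soon as $\lambda _{1}<\min\{d_{1},d_{2}\}$, which disposes of the two ``diagonal'' coefficients essentially for free, and reduces the problem to controlling the cross term $\mathcal{B}$.

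As in \eqref{eq:3.12}--\eqref{eq:3.13}, I would then split $\Phi$ additively, isolating the part $\Phi _{d}$ carrying only $d_{1},d_{2}$ (and $\lambda _{1}$) from the parts weighted by the self- and cross-diffusion coefficients, bound each summand from below by applying the elementary inequality \eqref{eq:3.14} to its positive-definite leading part while retaining the genuinely positive parts, and fix the free parameters exactly as in Lemma \ref{lem:l32}: $\lambda$ obeying \eqref{eq:3.18}, $\mu$ obeying \eqref{eq:3.22}, $\lambda \mu =K^{2}$, and $K$ in a small neighbourhood of $1$ so that $\Delta _{d}=(d_{1}+d_{2})^{2}-4K^{2}d_{1}d_{2}<0$. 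Condition \eqref{eq:1.7}, $(\alpha _{11}-\alpha _{21})(\alpha _{22}-\alpha _{12})>b_{11}b_{22}$, is precisely what is needed to make the discriminants of the $u$- and $v$-weighted pieces negative as well, so that after summation the coefficients of $\left\vert \nabla u\right\vert ^{2}$ and $\left\vert \nabla v\right\vert ^{2}$ dominate $\mathcal{B}\,\nabla u\cdot\nabla v$ with a surplus that can be quantified by a (small) $\lambda _{1}>0$.

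The step I expect to be the main obstacle is extracting the \emph{coercive} surplus $\lambda _{1}\left\vert \nabla H\right\vert ^{2}$, as opposed to mere nonnegativity of $\left( H_{u}P+H_{v}Q\right) \cdot\nabla H$: in contrast with Lemma \ref{lem:l32}, where the coefficients of $\Psi$ are affine in $(u,v)$, the coefficients $\mathcal{A},\mathcal{B},\mathcal{C}$ here are cubic in $(u,v)$, so one must verify that the negative contribution of the cross term is controlled by the positive terms \emph{uniformly} over the whole of $\mathbb{R}_{+}^{2}$, and not only on bounded sets. This is exactly where the strictness of \eqref{eq:1.7} and the room to push $K$ towards $1$ are used, and checking that the relevant discriminants stay bounded away from zero for all $(u,v)$ is the technical heart of the argument; the Remark following Lemma \ref{lem:l32} (the coercivity of $P\cdot\nabla H_{u}+Q\cdot\nabla H_{v}$) together with the weight $(H-C)_{+}$ appearing in \eqref{eq:3.9} then let one close the boundedness estimate of Theorem \ref{thm:t31}.
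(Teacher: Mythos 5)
Your setup matches the paper's: $\Phi$ is treated as a quadratic form $\mathcal{A}\left\vert \nabla u\right\vert ^{2}+\mathcal{B}\,\nabla u\cdot\nabla v+\mathcal{C}\left\vert \nabla v\right\vert ^{2}$ in the gradients, and your observation that $\mathcal{A},\mathcal{C}\geq 0$ once $\lambda _{1}<\min\{d_{1},d_{2}\}$ is exactly \eqref{eq:3.34}. But the proposal stops where the proof has to start, and the route you sketch for the remaining step does not go through. The paper does \emph{not} decompose $\Phi$ into $u$-, $v$- and $d$-weighted pieces as in Lemma \ref{lem:l32}. Its decisive observation is structural: since $\left( H_{u}P+H_{v}Q\right)\cdot\nabla H-\lambda _{1}(\cdots)\left\vert \nabla H\right\vert ^{2}$ is $\nabla H$ dotted with another linear form in $(\nabla u,\nabla v)$, its discriminant is the perfect square
\begin{equation*}
\Delta _{1}=\left[ \left( H_{u}P^{u}+H_{v}Q^{u}\right) H_{v}-\left( H_{u}P^{v}+H_{v}Q^{v}\right) H_{u}\right] ^{2},
\end{equation*}
independent of $\lambda _{1}$ (this is \eqref{eq:3.29}). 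That square is then computed explicitly as (essentially) a homogeneous cubic in $(u,v)$ squared, whose extreme coefficients are $O(\sqrt{K^{2}-1})$ precisely because $\lambda$ and $\mu$ obey \eqref{eq:3.17} and \eqref{eq:3.21}, plus a lower-degree remainder $(d_{1}-d_{2})H_{u}H_{v}$; this yields $\Delta _{1}\leq C_{4}(K^{2}-1+\epsilon )H^{3}$ on $\{u+v>C_{0}\}$ as in \eqref{eq:3.32}--\eqref{eq:3.33}. Combined with $A_{1}C_{1}\gtrsim H^{3}$ and the comparison \eqref{eq:3.35}, inequality \eqref{eq:3.14} gives \eqref{eq:3.36}, and shrinking $\epsilon$ and $K^{2}-1$ below $\lambda _{1}/(2C_{7})$ closes the argument. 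None of this is in your proposal: you assert that \eqref{eq:1.7} ``is precisely what is needed to make the discriminants of the $u$- and $v$-weighted pieces negative'' without exhibiting such pieces, and you yourself defer the uniform control of the discriminant over all of $\mathbb{R}_{+}^{2}$ as ``the technical heart.'' That heart \emph{is} the proof, and it is missing.

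Moreover, the term-by-term plan fails on a concrete piece. The pure-diffusion part of $\Phi$ is $\nabla H\cdot\left[ (d_{1}-\lambda _{1})H_{u}\nabla u+(d_{2}-\lambda _{1})H_{v}\nabla v\right] $, whose discriminant equals $(d_{1}-d_{2})^{2}H_{u}^{2}H_{v}^{2}\geq 0$ for \emph{every} choice of $K,\lambda ,\mu ,\lambda _{1}$; unlike $\Delta _{d}$ in Lemma \ref{lem:l32}, it cannot be made negative by pushing $K$ toward $1$, so \eqref{eq:3.14} applied to this summand always produces a genuinely negative contribution. The paper absorbs it only because that contribution is of lower homogeneity ($H_{u}^{2}H_{v}^{2}\sim H^{2}$ against $A_{1}C_{1}\sim H^{3}$) and because the second inequality in \eqref{eq:3.3} is only required on $\Gamma \cap \{H>C_{0}\}$ --- this is where $\lim_{u+v\rightarrow \infty }H_{u}H_{v}/\sqrt{\Delta _{1}^{0}}=0$ and the constant $C_{0}$ of Theorem \ref{thm:t31} enter. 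Your argument never invokes this restriction, and without it the $d_{1}\neq d_{2}$ case cannot be handled. In short, the gap is twofold: the key algebraic identity (the perfect-square, $\lambda _{1}$-independent discriminant) is absent, and the proposed substitute (mimicking the decomposition of Lemma \ref{lem:l32}) breaks on the diffusion piece.
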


\begin{proof}
\bigskip For the second Condition  \eqref{eq:3.3} which amount to the positivity of
the following quadratic form in $\nabla u$ and $\nabla v:$%
\begin{equation}
\label{eq:3.26}
\left. 
\begin{array}{rcl}
\Phi &=&\left( H_{u} \cdot  P+H_{v} \cdot Q\right) \nabla H-\lambda _{1}\left( 1+u+v\right)
\left\vert \nabla H\right\vert ^{2} \\ 
&=&A_{1}\left\vert \nabla u\right\vert ^{2}+B_{1}\nabla u \cdot \nabla
v+C_{1}\left\vert \nabla v\right\vert ^{2},%
\end{array}%
\right.   
\end{equation}%
where%
\begin{equation}
\label{eq:3.27}
\left. 
\begin{array}{rcl}
A_{1}&=&\left( H_{u} \cdot P^{u}+H_{v}\cdot Q^{u}\right) H_{u}-\lambda _{1}\left(
1+u+v\right) H_{u}^{2},\  \\ 
B_{1}&=&\left( H_{u}\cdot P^{u}+H_{v}\cdot Q^{u}\right) H_{v}+\left(
H_{u} \cdot P^{v}+H_{v} \cdot Q^{v}\right) H_{u}-2\lambda _{1}\left( 1+u+v\right)
H_{u}H_{v}, \\ 
C_{1}&=&\left( H_{u} \cdot P^{v}+H_{v}\cdot Q^{v}\right) H_{v}-\lambda _{1}\left(
1+u+v\right) H_{v}^{2}.%
\end{array}
\right.  
\end{equation}
and $\lambda _{1}$ any positive constant such that 
\begin{equation}
\label{eq:3.28}
\lambda _{1}<\min \left\{ d_{i},\ \alpha _{ij};\ i,\ j=1,\ 2\right\} . 
\end{equation}
However, a simple calculation shows that its discriminant is given by
\begin{equation}
\label{eq:3.29}
\left. 
\begin{array}{rcl}
\Delta _{1}&=&\left[ \left( H_{u} \cdot P^{u}+H_{v} \cdot Q^{u}\right) H_{v}-\left(
H_{u} \cdot P^{v}+H_{v}\cdot Q^{v}\right) H_{u}\right] ^{2} \\ 
&=&\left[ \left( H_{u}\cdot\left( P^{u}-d_{1}\right) +H_{v} \cdot Q^{u}\right)
H_{v}-\left( H_{u} \cdot P^{v}+H_{v} \cdot \left( Q^{v}-d_{2}\right) \right)
H_{u}+\left( d_{1}-d_{2}\right) H_{u}H_{v}\right] ^{2}.%
\end{array}%
\right.  
\end{equation}
Let us begin with the case $d_{1}=d_{2}:$ The discriminant can be written as
follows 
\begin{equation}
\label{3.30}
\Delta _{1}^{0}=\left\{ \alpha u^{3}+\beta \left[ \left( b_{11}\lambda
-b_{22}\right) u-\left( b_{22}\mu -b_{11}\right) v\right] uv+\gamma
v^{3}\right\} ^{2},  
\end{equation}
where
\begin{equation}
\label{eq:3.31}
\left. 
\begin{array}{rcl}
\alpha &=&\lambda \left[ -b_{11}\lambda +\left( \alpha _{11}-\alpha_{21}\right) \right] ,\\ 
\beta &=&\left( \lambda \mu -1\right) , \\ 
\gamma &=&\left[ b_{22}\mu +\left( \alpha _{12}-\alpha _{22}\right) \right]\mu .
\end{array}
\right.   
\end{equation}
Using  \eqref{eq:3.17} and \eqref{eq:3.21} we can find a positive constant $C_{3}$ such that%
\begin{equation*}
\Delta _{1}\leq C_{3}\left( K^{2}-1\right) H^{3}
\end{equation*}
in a neighborhood of $K=1.$

When $d_{1}\neq d_{2}$, then the discriminant becomes%
\begin{equation}
\label{eq:3.32}
\left. 
\begin{array}{rcl}
\Delta _{1}&=&\left[ \alpha u^{3}+\beta \left[ \left( b_{11}\lambda
-b_{22}\right) u-\left( b_{22}\mu -b_{11}\right) v\right] uv+\gamma
v^{3}+\left( d_{1}-d_{2}\right) H_{u}H_{v}\right] ^{2} \\ 
&\leq& 2\left[ \alpha u^{3}+\beta \left[ \left(
b_{11}\lambda +b_{22}\right) u+\left( b_{22}\mu +b_{11}\right) v\right]
uv+\gamma v^{3}\right] ^{2}+2\left[ \left( d_{1}-d_{2}\right) H_{u}H_{v}%
\right] ^{2}.
\end{array}
\right.   
\end{equation}
Since
\begin{equation*}
\underset{u+v\rightarrow \infty }{\lim }\frac{H_{u}H_{v}}{\sqrt{\Delta
_{1}^{0}}}=0,
\end{equation*}
then for all $\epsilon >0$, we can find positive constants $C_{4}$ and $%
C_{0}$ such that%
\begin{equation}
\label{eq:3.33}
\Delta _{1}\leq C_{4}\left( K^{2}-1+\epsilon \right) H^{3},\text{ for all }%
u+\ v>C_{0}
\end{equation}%
uniformly in a bounded neighborhood of $K=1$. Since%
\begin{equation}
\label{eq:3.34}
\left. 
\begin{array}{rcl}
A_{1}&=&\left[ \left( H_{u} \cdot P^{u}+H_{v}\cdot Q^{u}\right) -\lambda _{1}\left(
1+u+v\right) H_{u}\right] H_{u} \\ 
&=&\left[ \left( \lambda u+v\right) \left( d_{1}-\lambda _{1}+\left(
\alpha _{11}-\lambda _{1}\right) u+\left( \alpha _{12}-\lambda _{1}\right)
v\right) +\left( u+\mu v\right) b_{22}v\right] \left( \lambda u+v\right),
\\ 
C_{1}&=&\left( H_{u} \cdot P^{v}+H_{v} v Q^{v}\right) H_{v}-\lambda _{1}\left(
1+u+v\right) H_{v}^{2} \\ 
&=&\left[ \left( \lambda u+v\right) b_{11}u+\left( u+\mu v\right)
\left( d_{2}-\lambda _{1}+\left( \alpha _{21}-\lambda _{1}\right) u+\left(
\alpha _{22}-\lambda _{1}\right) v\right) \right] \left( u+\mu v\right),%
\end{array}
\right.   
\end{equation}
are positive homogeneous polynomials in $u>0$ and $v>0$ \ of third degree and
as a simple calculation shows that
\begin{equation}
\label{eq:3.35}
C_{5}\left( \left\vert \nabla u\right\vert ^{2}+\left\vert \nabla
v\right\vert ^{2}\right) H\leq \left\vert \nabla H\right\vert ^{2}\leq
C_{6}\left( \left\vert \nabla u\right\vert ^{2}+\left\vert \nabla
v\right\vert ^{2}\right) H,  
\end{equation}%
for some positive constants $C_{5}$ and $C_{6}$, then using inequality
 \eqref{eq:3.14}, we can find another constant $C_{7}$ and a constant $C_{0}$
(independent of $K$ bounded) such that
\begin{equation}
\label{eq:3.36}
\left( H_{u} \cdot P+H_{v} \cdot Q\right) \nabla H\geq \lambda _{1}\left( 1+u+v\right)
\left\vert \nabla H\right\vert ^{2}-C_{7}\left( K^{2}-1+\epsilon \right)
\left( u+v\right) \left\vert \nabla H\right\vert ^{2},
\end{equation}%
for all $u+v>C_{0}$. Taking $\epsilon <\frac{\lambda _{1}}{2C_{7}}$ and a
neighborhood of $K=1$ in which $K^{2}-1<\frac{\lambda _{1}}{2C_{7}},$ we get
the second Condition  \eqref{eq:3.3}.
\end{proof}
\begin{remark}
 We observe that, in the proof of Lemma \ref{lem:l34}, we can prove%
\begin{equation}
\label{eq:3.37}
\left( H_{u} \cdot P+H_{v}  \cdot Q\right) \nabla H\geq \lambda _{1}\left( 1+u+v\right)
\left\vert \nabla H\right\vert ^{2},  
\end{equation}
for all \ $(u,v)\in \Gamma \cap \left\{ (u,v):\ H(u,v)>C_{0}\right\} .$
\end{remark}

Condition  \eqref{eq:3.4}  is assumed by the following

\begin{lemma}
\label{lem:l36}
Condition  \eqref{eq:3.4}  is satisfied.
\end{lemma}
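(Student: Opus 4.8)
The plan is to reduce \eqref{eq:3.4} to a statement about the sign of a homogeneous cubic form. With $H$ as in \eqref{eq:3.1} one has $H_u=\lambda u+v$ and $H_v=u+\mu v$, so that
\[
H_uf(u,v)+H_vg(u,v)=(\lambda u+v)\,u\,(a_1-b_1u+c_1v)+(u+\mu v)\,v\,(a_2+b_2u-c_2v),
\]
a polynomial in $(u,v)$ with no constant and no linear term. I would expand it and separate the homogeneous pieces: the cubic part
\[
\mathcal{C}(u,v)=-b_1\lambda u^3-c_2\mu v^3+(c_1\lambda-b_1+b_2)\,u^2v+(c_1-c_2+b_2\mu)\,uv^2
\]
and a quadratic remainder $\mathcal{Q}(u,v)=a_1\lambda u^2+(a_1+a_2)uv+a_2\mu v^2$. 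Since $\Gamma\subset\mathbb{R}_+^2$, it is enough to work with $u,v\ge0$.

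The core step is to prove that $\mathcal{C}$ is strictly negative on $\mathbb{R}_+^2\setminus\{0\}$. The pure cubic coefficients $-b_1\lambda$ and $-c_2\mu$ are negative because $b_1,c_2>0$ and $\lambda,\mu>0$; the only coefficients that could spoil negativity are the two mixed ones, $c_1\lambda-b_1+b_2$ and $c_1-c_2+b_2\mu$. Here I invoke the standing hypotheses on $b_2,c_1$: if both are nonpositive then $c_1\lambda-b_1+b_2\le-b_1<0$ and $c_1-c_2+b_2\mu\le-c_2<0$, so all four coefficients of $\mathcal{C}$ are $\le0$ for $u,v\ge0$ and $\mathcal{C}(u,v)<0$ away from the origin; and if one or both of $b_2,c_1$ is positive but sufficiently small --- in the sense of \eqref{eq:3.38}, \eqref{eq:3.381} --- the positive contribution it makes to $c_1\lambda-b_1+b_2$ is dominated by $-b_1$, and its contribution to $c_1-c_2+b_2\mu$ is dominated by $-c_2$, so the same conclusion holds. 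Restricting $\mathcal{C}$ to the compact segment $\{(s,1-s):0\le s\le1\}$ and using homogeneity then yields $\mathcal{C}(u,v)\le-\delta\,(u+v)^3$ on $\mathbb{R}_+^2$ for some $\delta>0$.

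Granting this, the rest is routine. One estimates $|\mathcal{Q}(u,v)|\le M(u+v)^2$ with $M$ depending only on $a_1,a_2,\lambda,\mu$, so that $H_uf+H_vg=\mathcal{C}+\mathcal{Q}\le-\delta(u+v)^3+M(u+v)^2<0$ whenever $u+v>M/\delta$. Since $0\le H(u,v)\le C_*(u+v)^2$ on $\mathbb{R}_+^2$ for a constant $C_*=C_*(\lambda,\mu)$, the level set $\{(u,v)\in\Gamma:H(u,v)>C_0\}$ is contained in $\{u+v>\sqrt{C_0/C_*}\}$; choosing $C_0>C_*(M/\delta)^2$ makes $u+v>M/\delta$ there, and \eqref{eq:3.4} follows. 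The one point demanding care is the sign analysis of the two mixed coefficients of $\mathcal{C}$ across all admissible sign patterns of $(b_2,c_1)$; note that there is no circularity, because $\lambda$ and $\mu$ were already fixed in Lemmas \ref{lem:l32} and \ref{lem:l34} using only $d_i,\alpha_{ij},b_{ii}$, so the smallness thresholds for a positive $b_2$ or $c_1$ may be taken independent of $b_2,c_1$.
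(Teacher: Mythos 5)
Your proof is correct and follows essentially the same route as the paper: both decompose $H_uf+H_vg$ into the quadratic part $a_1u(\lambda u+v)+a_2v(u+\mu v)$ minus the cubic form $\Phi$ of \eqref{eq:3.381}, and conclude that the cubic dominates for $u+v$ large. In fact your write-up supplies the details the paper leaves implicit — namely that the sign hypotheses on $b_1,c_2,b_2,c_1$ make $\Phi$ bounded below by $\delta(u+v)^3$ on the positive cone, which is exactly what justifies the paper's limit statement $\lim_{u+v\to\infty}\bigl(a_1u(\lambda u+v)+a_2v(u+\mu v)\bigr)/\Phi(u,v)=0$.
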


\begin{proof}
We have%
\begin{equation}
\label{eq:3.38}
H_{u}f+H_{v}g=a_{1}u\left( \lambda u+v\right) +a_{2}v\left( u+\mu v\right)
-\Phi \left( u,v\right),  
\end{equation}
with
\begin{equation}
\label{eq:3.381}
\Phi \left( u,v\right) =\lambda b_{1}u^{3}+\left( -\lambda
c_{1}+b_{1}-b_{2}\right) u^{2}v+\left( -c_{1}+c_{2}-\mu b_{2}\right)
uv^{2}+\mu c_{2}v^{3}.  
\end{equation}
Since
\begin{equation*}
\underset{u+v\rightarrow \infty }{\lim }\frac{a_{1}u\left( \lambda
u+v\right) +a_{2}v\left( u+\mu v\right) }{\Phi \left( u,v\right) }=0,
\end{equation*}
then we have  \eqref{eq:3.4} .
\end{proof}

\begin{proof}
\textbf{of Theorem \ref{thm:t21} }
Using the above lemmata, we deduce easily the
inequalities  \eqref{eq:3.3} and \eqref{eq:3.4}  of Theorem \ref{thm:t31} and then the boundedness of the
positive solutions to problem  \eqref{eq:1.1}-\eqref{eq:1.4} under the Condition  \eqref{eq:1.7}.
\end{proof}

\begin{proof}
\textbf{of Theorem \ref{thm:t22}} The proof is an immediate consequence of Theorem
\ref{thm:t11}, Theorem \ref{thm:t21}, and the preliminary observations.
\end{proof}

\section{Numerical Simulations and Discussion}

In this section we would like to support our results in Section 2 by simulating \eqref{eq:1.1}-\eqref{eq:1.4}. Our primary goal is to choose parameters that satisfy Condition \eqref{eq:2.1}, whilst violating \eqref{eq:1.6}, and then simulate our system for a host of different initial conditions. If one observes Case 1 in Table \ref{tab:nonlin}
\begin{equation*}
\alpha _{11}-\alpha _{21} = 0.1 - 0.06 < 0.06 = b_{22},
\end{equation*}
we see Condition (iii) of \eqref{eq:1.6}, is violated, but our Condition (iv)  of \eqref{eq:2.1} holds.

If one observes Case 2 in Table \ref{tab:nonlin}
\begin{equation*}
\alpha _{11}-\alpha _{21} = 1.2 - 0.3 < 1 = b_{22},
\end{equation*}
again we see Condition (iii) of \eqref{eq:1.6}, is violated, but our Condition (iv) of \eqref{eq:2.1} holds.

We perform our numerical simulations in MATLAB R2013b, in two space dimensions with domain $\Omega=[0,\pi]\times[0,\pi]$. The spectral Galerkin method was used to approximate $u,v$ defined in \eqref{eq:1.1}-\eqref{eq:1.2}, as $u^{(n)}:=\sum_{j,k=0}^n\mu_{1,j,k}\varphi_{j,k}$, $v^{(n)}:=\sum_{j,k=0}^n\mu_{2,j,k}\varphi_{j,k}$, where
\begin{equation}
\label{eq:5.1}
\varphi_{j,k}(x,y)=\left\{\begin{array}{rl}
\frac{1}{\pi},&\mbox{if}~j,k=0\\
\frac{\sqrt{2}}{\pi}\cos(ky),&\mbox{if}~j=0,k\neq0\\
\frac{\sqrt{2}}{\pi}\cos(jx),&\mbox{if}~j\neq0,k=0\\
\frac{2}{\pi}\cos(jx)\cos(ky),&\mbox{if}~j,k\neq0
\end{array}\right..
\end{equation}

To approximate $\mu_{i,j,k}$ ($i=1,2$), we use the methods described in \cite{YongThesis} to express $\mu_{1,j,k}$ as the following ordinary differential equation that can be solved numerically using the Matlab function {\tt ode113}:
\begin{eqnarray*}
\frac{d}{dt}\mu_{1,\tilde{j},\tilde{k}}&=&(a_1-(\tilde{j}^2+\tilde{k}^2)d_1)\mu_{1,\tilde{j},\tilde{k}}\nonumber\\
&&-(\tilde{j}^2+\tilde{k}^2)\sum_{l,m=0}^n\sum_{\tilde{l},\tilde{m}=0}^n(\alpha_{11}+\alpha_{12}+b_{11})\mu_{1,l,m}\mu_{2,\tilde{l},\tilde{m}}\int_{\Omega}\varphi_{l,m}\varphi_{\tilde{l},\tilde{m}}\varphi_{\tilde{j},\tilde{k}}\nonumber\\
&&+\sum_{l,m=0}^n\sum_{\tilde{l},\tilde{m}=0}^n(\alpha_{11}\mu_{1,l,m}\mu_{1,\tilde{l},\tilde{m}}+\alpha_{12}\mu_{1,l,m}\mu_{2,\tilde{l},\tilde{m}}+b_{11}\mu_{2,l,m}\mu_{1,\tilde{l},\tilde{m}})\int_{\Omega}\varphi_{l,m}\nabla \varphi_{\tilde{l},\tilde{m}}\cdot\nabla\varphi_{\tilde{j},\tilde{k}}\nonumber\\
&&-\sum_{l,m=0}^n\sum_{\tilde{l},\tilde{m}=0}^n(b_1\mu_{1,\tilde{l},\tilde{m}}+c_1\mu_{2,\tilde{l},\tilde{m}})\mu_{1,l,m}\int_{\Omega}\varphi_{\tilde{j},\tilde{k}}\varphi_{l,m}\varphi_{\tilde{l},\tilde{m}}
\end{eqnarray*}
A similar method is used to approximate $\mu_{2,j,k}$.


We now provide the results of numerical simulations on  \eqref{eq:1.1}-\eqref{eq:1.4}. Two parameter cases are selected (see Table \ref{tab:nonlin}). Notice that Case 1 does indeed fit the form of the SKT model \cite{Shigesada}. Nine simulations were run for each parameter case under various initial conditions for $u$ and $v$ by selecting permutations of the densities given in Figure \ref{fig:sub1}-\ref{fig:sub3}. The final distributions for $u$ and $v$ for both cases under \emph{all} initial conditions approached spatial homogeneity as described in \cite{L96}. This shows that for various initial conditions, our result Theorem \ref{thm:t22} is verified, that is, solutions approach steady state, so exist globally, whilst the results of \cite{D. Le} are not applicable here.

\begin{table}[htp]
\centering  
\begin{tabular}{|c|c|c|} 
\hline  
			& Case \ 1	& Case \ 2		\\ \hline
$d_1$		&  0.01	&   0.25		\\ \hline
$d_2$		&    0.1	&   0.5		\\ \hline
$a_1$		&    1		&   0.2		\\ \hline
$b_1$		&    2		&   0.8		\\ \hline
$c_1$		&   0.2	&   0.8		\\ \hline
$a_2$		&  0.3	&   0.3		\\ \hline
$b_2$		&   1		&   0.4		\\ \hline
$c_2$		&   4		&   0.9		\\ \hline
$\alpha_{11}$	&   0.1	&   1.2		\\ \hline
$\alpha_{12}$	&   0.12	&   0.25		\\ \hline
$\alpha_{21}$	&   0.06	&   0.3		\\ \hline
$\alpha_{22}$	&   0.8	&   0.75		\\ \hline
$b_{11}$		&   0.12	&   0.1		\\ \hline
$b_{22}$		&   0.06	&   1			\\ \hline
\end{tabular}
\caption{Parameters used in simulations} 
\label{tab:nonlin}
\end{table}

\begin{figure}
\centering
\begin{subfigure}{0.33\textwidth}
  \centering
  \includegraphics[width=1\linewidth]{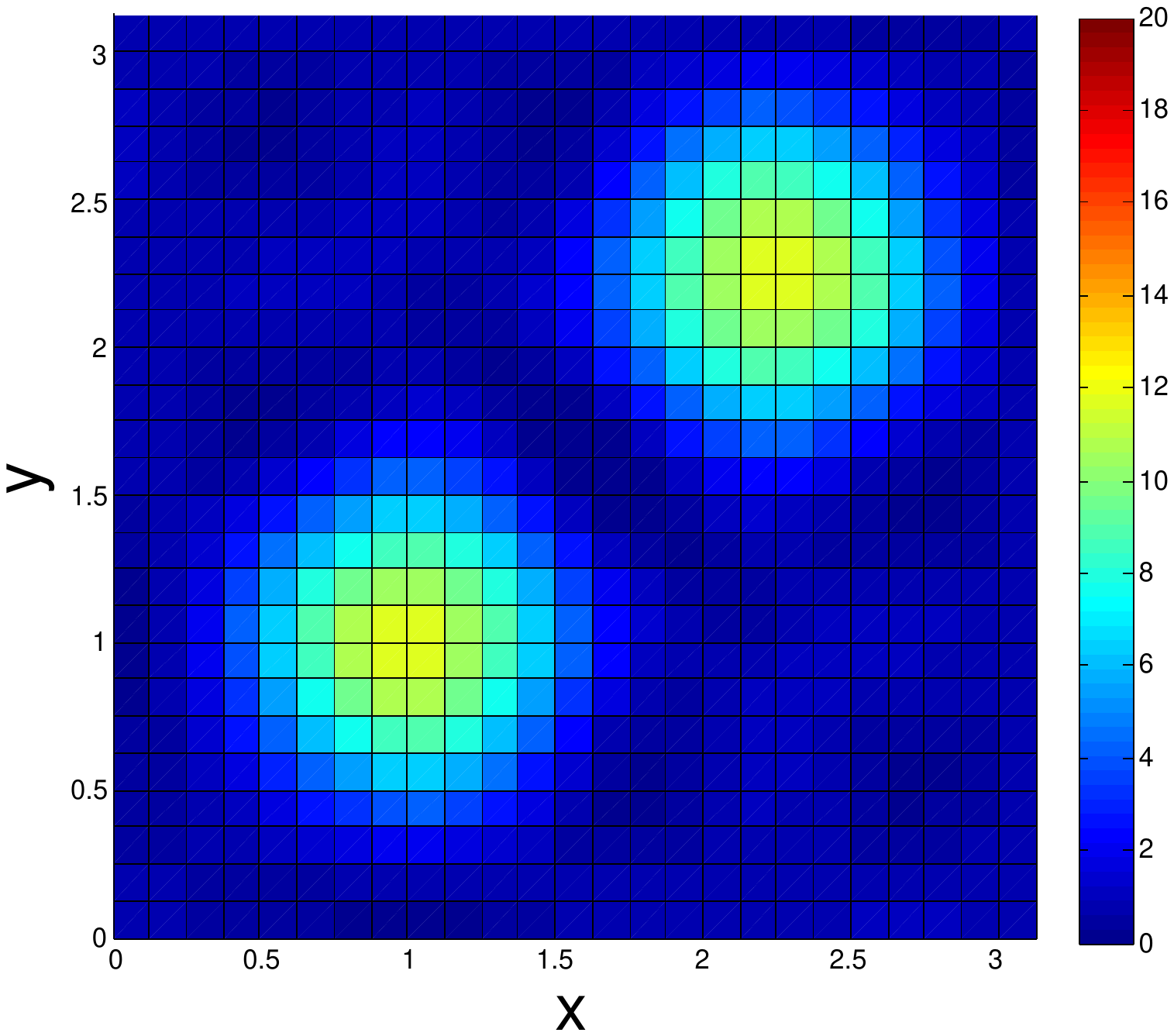}
  \vspace{-2cm}\caption{}
  \label{fig:sub1}
\end{subfigure}%
\begin{subfigure}{0.33\textwidth}
  \centering
  \includegraphics[width=1\linewidth]{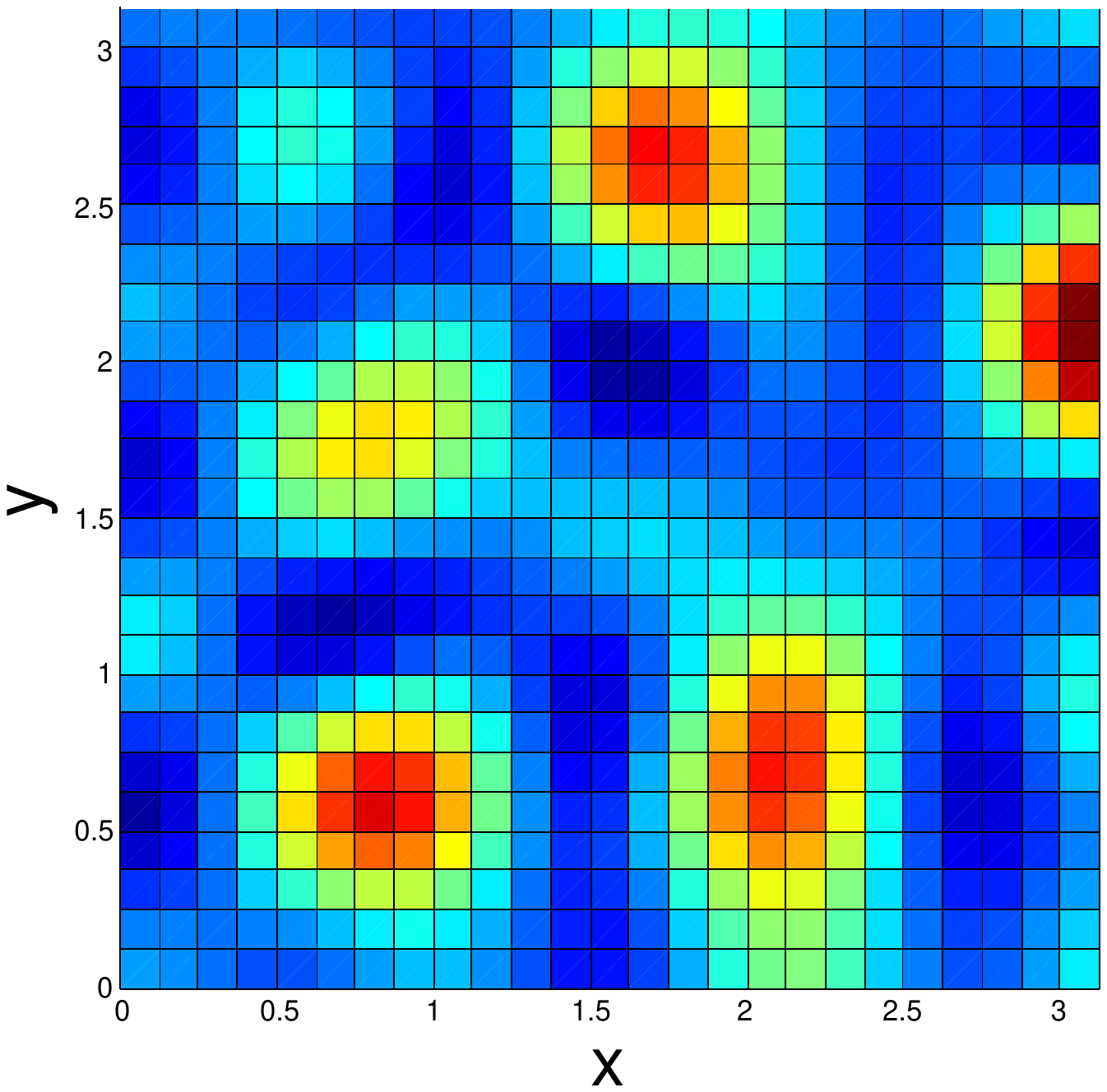}
  \vspace{-2cm}\caption{}
  \label{fig:sub2}
\end{subfigure}
\begin{subfigure}{0.33\textwidth}
  \centering
  \includegraphics[width=1\linewidth]{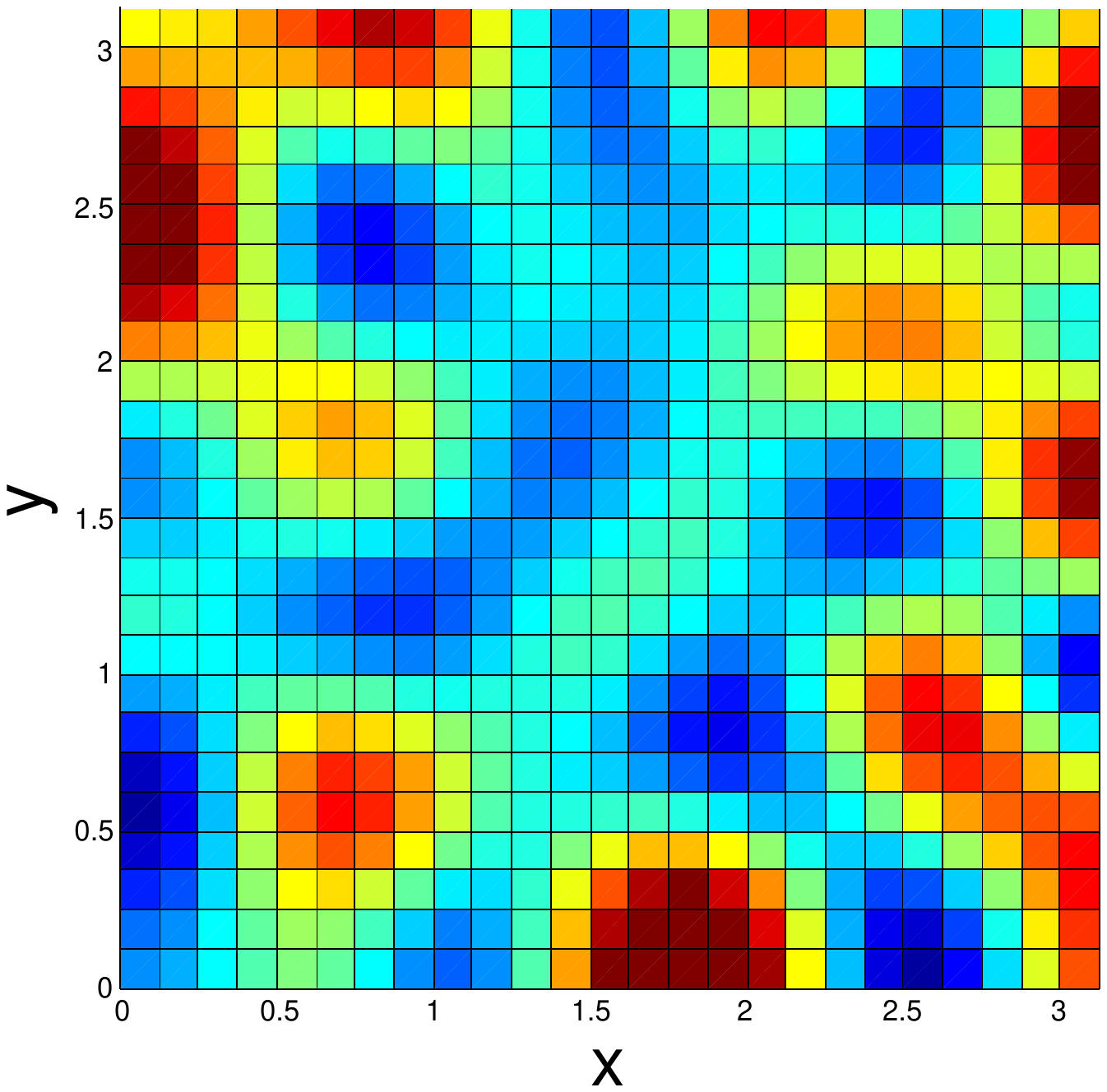}
  \vspace{-2cm}\caption{}
  \label{fig:sub3}
\end{subfigure}

\caption{Figures \ref{fig:sub1}-\ref{fig:sub3} represent possible initial densities of the species $u$ and $v$. We run 9 different tests by selecting initial conditions for $u$ and $v$ from \ref{fig:sub1}, \ref{fig:sub2}, and \ref{fig:sub3}. The color scheme is the same for all plots.}
\label{initial_conditions}
\end{figure}

\section{Acknowledgments}
The second author is partially supported by the NSF grant DMS-0946431.


\begin{thebibliography}{99}
\bibitem{Alikakos} N. Alikakos, L$^{p}$-Bounds of Solutions of
Reaction-Diffusion Equations. Comm.. P. D. E. 4 (1979), pp. 827-828.

\bibitem{Amman} H. Amman, Dynamic theory of quasilinear parabolic systems --
III. Global existence. Math. Z. 202, (1989), pp. 211-250.

\bibitem{Berres} S. Berres, R. Ruiz-Baier, H. Schwandt, E.M. Tory, An adaptive finite-volume method for a model of two-phase pedestrian flow. Networks and Heterogeneous Media, Vol. 6, number 3,  (2011).

\bibitem{J06} L. Chen and A. Jungel,
Analysis of a parabolic cross-diffusion population model without self-diffusion. J. Diff. Eqn., Vol. 224, number 11, (2006), pp. 39-59.

\bibitem{CJ04} L. Chen and A. Jungel,
Analysis of a multi-dimensional parabolic population model with strong cross-diffusion. SIAM J. of Math. Anal., Vol. 36,  (2004), pp. 301-322.



\bibitem{Friedman} A. Friedman, Partial Differential Equations of Parabolic
Type. Prentice Hall Englewood Chiffs. N. J., 1964.

\bibitem{GJ03} G. Galiano, M. L. Garzon and A. Jungel,
Semi-discretization and numerical convergence of a nonlinear cross-diffusion population model. Numer. Math., Vol. 93, (2003), pp. 655-673.

\bibitem{Henry} D. Henry, Geometric Theory of Semi-linear Parabolic
Equations. Lecture Notes in Mathematics 840, Springer-Verlag, New-York, 1984.



\bibitem{Hollis-Martin-Pierre} S. L. Hollis, R. H. Martin and M. Pierre,
Global Existence and Boundedness in Reaction Diffusion Systems. SIAM. J.
Math. Anal, Vol. 18, number 3, (1987).

\bibitem{John-Stara} O. John and J. Stara, Some (new) counterexamples of
parabolic systems. Commentat. Math. Univ. Carol. 36, (1995), pp. 503-510.

\bibitem{JZ14} A. Jungel and N. Zamponi,
Boundedness of weak solutions to cross diffusion systems from population dynamics, arXiv:1404.6054, (2014).

\bibitem{Kirane-Kouachi1} M. Kirane and S. Kouachi, Asymptotic Behavior for
a System Describing Epidemics with Migration and Spatial Spread of
Infection. Dynamical Systems and Applications, Vol 12, (1993), pp. 121-130.

\bibitem{Kirane-Kouachi2} M. Kirane and S. Kouachi, A strongly nonlinear
reaction diffusion model for a deterministic diffusive epidemic. Japan
Journal of Industrial and Applied Mathematics, Volume 12, number 1, 
(1995).

\bibitem{Kirane-Kouachi3} M. Kirane and S. Kouachi, Global Solutions to a
System of Strongly Coupled Reaction-Diffusion Equations. Nonlinear Analysis
Theory, Methods and Applications, Vol 126, (1996).

\bibitem{Kouachi3} S. Kouachi, Global existence for coupled reaction
diffusion systems modelling some reversible chemical reactions, Dynamics of
Partial Differential Equations, Volume 8, Number 2, (2011), pp. 79-88.

\bibitem{Kouachi1} S. Kouachi, Existence of global solutions to
reaction-diffusion systems via a Lyapunov functional, Electron. J.
Differential Equations, 68, (2001), pp. 1-10.

\bibitem{Kouachi2} S. Kouachi, Existence of global solutions to
reaction-diffusion systems with nonhomogeneous boundary conditions via a
Lyapunov functional Electronic Journal of Differential Equations, Vol.
2002(2002), No. 88, pp. 1-13.

\bibitem{Kouachi4} S. Kouachi, Global existence for reaction diffusion
systems without nonlinearities growth condition, Mathematical Methods in the
Applied Sciences, volume 34, issue 7, (2011), pp. 798-802.

\bibitem{Kuto} K.Kuto, Stability and Hopf bifurcation of coexistence steady-states to an SKT model in spatially heterogeneous environment, Discrete Contin. Dynam. Sys. A, volume 24 (2009), pp. 489-509.

\bibitem{D. Le} D. Le, Global existence for a class of strongly coupled
parabolic systems, Annali di Matematica 185, (2006), 133-154.


\bibitem{Le-Ngu} D. Le, T. Nguyen, Global existence for a class of
triangular parabolic systems on domains of arbitrary dimension, Proc. Amer.
Math. Soc. 133 (7), (2005), pp. 1985-1992.

\bibitem{LN6} D.Le and T.Nguyen, 
Everywhere regularity of solutions to a class of strongly coupled degenerate parabolic systems, Comm. in PDE's., 31, no.2, (2006), pp. 307-324.


\bibitem{Loum} Y.Lou, S.Martinez and W.M.Ni, 
On 3 x 3 Lotk-Volterra competition systems with cross-diffusion. Discrete Contin. Dynam. Sys A, 6 (2000), pp. 175-190.

\bibitem{L96} Y. Lou and M.Ni,
Diffusion, self-diffusion and cross-diffusion. J. Diff. Eqn., Vol.131, (1996), pp.79-131.


\bibitem{Lou98} Y.Lou, W.M.Ni and Y.Wu
On the global existence of a cross-diffusion system. Discrete Contin. Dynam. Sys A, 4, (1998), pp. 193-203.

\bibitem{Masuda} K. Masuda, On the Global Existence and Asymptotic Behavior
of Solutions of Reaction-Diffusion Equations. Hokkaido. Math. J. 12, (1983),
pp. 360-370.

\bibitem{Matano} H. Matano and M. Mimura, Pattern formation in competition-diffusion systems in nonconvex domains, Publ. Res. Inst. Math. Sci. 19, (1983), pp. 1049-1079.

\bibitem{Mimura} M. Mimura and K. Kawasaki, Spatial segregation in competitive interaction diffusion equations, J. Math. Biol. 9, (1980), pp. 49-64.

\bibitem{Morgan} J. Morgan, Global Existence for Semilinear Parabolic
Systems, SIAM J. Math. Anal. 20, (1989), pp. 1128-1144.

\bibitem{Pao} C.Pao, 
 Strongly coupled elliptic systems and applications to Lotka-Volterra models with cross-diffusion, Nonlin. Anal.: Theory Meth. Appl. 60, (2005), pp. 1197-1217.

\bibitem{Par-Kou-Gut} R. D. Parshad, S. Kouachi and J. B. Gutierrez, Global
Existence and Asymptotic Behavior of a Model for Biological Control of
Invasive Species via Supermale Introduction, Commun. Math. Sci., Vol. 11,
No. 4, (2013), pp. 971-992.

\bibitem{Pazy} A. Pazy, Semigroups of Linear Operators and Applications to
Partial Differential Equations. Applied Math. Sciences 44, Springer-Verlag,
New York, 1983.


\bibitem{Rothe} F. Rothe, Global Solutions of Reaction-Diffusion Systems,
Lecture Notes in Math. 1072, Springer-Verlag, Berlin, 1984.

\bibitem{Ryu1} K. Ryu and I. Ahn, Positive steady-states for two interacting species models with linear self-cross-diffusions. Discrete Contin. Dynam. Sys. A 9, (2003), pp. 1049-1061.

\bibitem{Ryu2} K. Ryu and I. Ahn, Coexistence states of certain population models with nonlinear diffusions among multi-species. Dynam. Contin. Discrete Impuls. Syst. Ser A:Math Anal. 12, (2005), pp. 235-246.

\bibitem{Shigesada} 
N. Shigesada, K. Kawasaki, and E. Teramoto. Spatial segregation of interacting species. J. Theor. Biol., 79(1), (1979), pp. 83-99.

\bibitem{Shim1} S. Shim, Long time properties of prey-predator system
with cross diffusion, Comm. Korean Math. Soc 21, (2006), pp. 293-320.


\bibitem{Smoller} J. Smoller, Shock Waves and Reaction-Diffusion Equations,
Springer-Verlag, New York, 1983.


\bibitem{Sheng} S. Xu, Existence of global solutions for a predator
prey model with cross diffusion, Electron. J. Differential Equations, Vol
2008 (2008), No. 06, pp. 1-14.

\bibitem{YongThesis}
K.E. Yong. {\em A mathematical model of the interactions between pollinators and
  their effects on pollination of almonds}. ProQuest LLC, Ann Arbor, MI, 2012. Thesis (Ph.D.)--The University of Iowa.

\bibitem{Yong} K.E. Yong, Yi Li, S.D. Hendrix, Habitat choice of multiple
pollinators in almond trees and its potential effect on pollen movement and
productivity:A theoretical approach using the
Shigesada--Kawasaki--Teramoto model, Journal of Theoretical Biology
305, (2012), pp. 103-109.


\end{thebibliography}
\end{document}